\newcommand{\pdiv}{$p$-divisible }
\newcommand{\G}{\mathbb{G}}
\newcommand{\F}{\mathbb{F}}
\newcommand{\Q}{\mathbb{Q}}
\newcommand{\Z}{\mathbb{Z}}
\newcommand{\QZ}{\Q_p/\Z_p}
\newcommand{\al}{\alpha}
\newcommand{\Lk}{\Lambda_k}
\newcommand{\lra}[1]{\overset{#1}{\longrightarrow}}
\newcommand{\Prod}[1]{\underset{#1}{\prod}}
\newcommand{\Coprod}[1]{\underset{#1}{\coprod}}
\newcommand{\Colim}[1]{\underset{#1}{\colim}}
\newcommand{\E}{E_{n}}
\newcommand{\LE}{L_{t,n}}
\newcommand{\Loops}{\mathcal{L}}
\newcommand{\bC}{\bar{C}_{t}}
\date{\today}
\theoremstyle{definition}
\DeclareMathOperator{\im}{im}
\DeclareMathOperator{\Fix}{Fix}
\DeclareMathOperator{\Hom}{Hom}
\DeclareMathOperator{\colim}{colim}
\DeclareMathOperator{\Iso}{Iso}
\DeclareMathOperator{\Spec}{Spec}
\newcommand{\Mod}{\mathrm{Mod}}
\newcommand{\Sp}{\mathrm{Sp}}
\newcommand{\Es}[1]{E_{#1}}
\newcommand{\As}[1]{A_{#1}}
\newcommand{\fm}{\mathfrak{m}}
\newcommand{\fib}{\mathrm{fib}}
\newcommand{\cof}{\mathrm{cofib}}
\newcommand{\Kos}{\mathrm{Kos}}
\newcommand{\cC}{\mathcal{C}}
\newcommand{\sm}{\wedge}
\newcommand{\W}{\mathrm{W}}
\newcommand{\Good}{\mathcal{G}}
\newtheorem{theorem}[subsection]{Theorem}
\newtheorem{proposition}[subsection]{Proposition}
\newtheorem{corollary}[subsection]{Corollary}
\newtheorem{lemma}[subsection]{Lemma}
\newtheorem*{maintheorem}{Theorem}
\newtheorem{definition}[subsection]{Definition}
\newtheorem{example}[subsection]{Example}
\newtheorem{remark}[subsection]{Remark}
\let\SK@label\label\fi
 \let\your@thm\@thm
 \def\@thm#1#2#3{\gdef\currthmtype{#3}\your@thm{#1}{#2}{#3}}
 \def\mylabel#1{{\let\your@currentlabel\@currentlabel\def\@currentlabel
  {\currthmtype~\your@currentlabel}
 \SK@label{#1@}}\label{#1}}
 \def\myref#1{\ref{#1@}}
\begin{document}
\title{Centralizers in good groups are good}
\author{Tobias Barthel}
\author{Nathaniel Stapleton}

\begin{abstract}
We modify the transchromatic character maps of \cite{tgcm} to land in a faithfully flat extension of Morava $E$-theory. Our construction makes use of the interaction between topological and algebraic localization and completion. As an application we prove that centralizers of tuples of commuting prime-power order elements in good groups are good and we compute a new example. 
\end{abstract}

\maketitle

\section{Introduction and outline}
The character maps of \cite{tgcm} suggest the intriguing possibility of approximating height $n$ Morava $E$-theory by Morava $E$-theory of lower height. In particular, it is easy to imagine that a character map from $\E$ to $p$-adic $K$-theory could have many applications because it could reduce height $n$ problems to representation theory in the same way that the character map of \cite{hkr} reduces height $n$ problems to combinatorial problems. However, the maps produced in \cite{tgcm} have codomain an extension of the $K(t)$-localization of $\E$ for $t < n$. This cohomology theory is less familiar and presents some computational difficulties because its coefficients are not a complete local ring. In this paper, we present a modification of the character maps that, for good groups, land in a faithfully flat extension of $E_t$. This work grew out of work of the second author with Tomer Schlank on Strickland's theorem in \cite{tpst} where this modified character map from height $n$ to height $1$ plays a critical role. 

A finite group is good (at a prime $p$) if $\E^*(BG)$ is free and evenly concentrated. There are a variety of classes of groups that are known to be good. These include finite abelian groups, symmetric groups, finite general linear groups away from the characteristic, wreath products and products of these groups, and groups of order $p^3$ (see Theorem E and Proposition 7.10 in \cite{hkr}, \cite{tanabe}). It was a conjecture of Hopkins, Kuhn, and Ravenel that all groups are good; however this was disproved by Kriz in \cite{Krizodd}. A corollary of the construction of this modified character map is that centralizers of 
abelian $p$-groups in good groups are good. This enlarges the class of groups known to be good in a very different way than the results above. 

Our approach relies on a variety of facts concerning completion and localization of flat modules in stable homotopy theory. In order to put these into a more abstract context, we review the relationship between chromatic localization functors on the category of $MU$-modules and certain arithmetic localizations and completions (as described in \cite{greenleesmay95}). As an immediate consequence we obtain that the coefficients of the $K(t)$-localization of a flat $\E$-module $M$ are given by the simple formula
\[
\pi_*(L_{K(t)}M) \cong (\pi_*M)[u_{t}^{-1}]_{(p, \ldots, u_{t-1})}^{\wedge}.
\]
We also provide a proof of a mild generalization of Hovey's unpublished theorem that the $I_n$-completion of a flat $\E^*$-module is flat. 

We then use these methods to analyze the spectrum $\bC = L_{K(t)}(C_t \sm E_t)$, where $C_t$ is an $E_{\infty}$-ring with coefficients the ring $C_{t}^*$ from \cite{tgcm}. After proving that $\bC^*$ is faithfully flat as an $E_{t}^*$-module we construct the modified character maps. For $H$ the centralizer of a tuple of commuting elements in $G$, we show that there is an isomorphism
\[
\bC^* \otimes_{L_{K(t)}\E^*}L_{K(t)}\E^*(BH) \cong \bC^* \otimes_{E_{t}^*} E_{t}^*(BH).
\]

Now let $\Loops(-) = \hom(B\Z_p,-)$ be the $p$-adic free loop space functor. The main object of study is the composite of the character map from \cite{tgcm} with the isomorphism above
\[
\E^*(BG) \lra{} \bC^* \otimes_{L_{K(t)}\E^*}L_{K(t)}\E^*(\Loops^{n-t} BG) \cong \bC^* \otimes_{E_{t}^*} E_{t}^*(\Loops^{n-t} BG).
\]
The codomain of this character map is just the $E_t$-cohomology of $\Loops^{n-t} BG$ base changed to a faithfully flat extension. Morava $E_t$ is certainly more computable and more familiar than $L_{K(t)}\E$. Our main result gives a condition for when this map induces an isomorphism after base change to $\bC^*$:
\begin{maintheorem}
For a good group $G$ the map above, base changed to $\bC^*$, gives an isomorphism
\[
\bC^* \otimes_{E_{n}^*} E_{n}^*(BG) \lra{\cong} \bC^* \otimes_{E_{t}^*} E_{t}^*(\Loops^{n-t}BG).
\] 
\end{maintheorem}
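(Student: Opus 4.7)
The plan is to compare the modified character map with the original transchromatic character map of \cite{tgcm}, base-changed from $C_t^*$ up to $\bC^*$. Since $\bC = L_{K(t)}(C_t \sm E_t)$ receives a unit map from $C_t$ and $\bC^*$ is faithfully flat over $E_t^*$ (as established earlier in the paper), this base change is harmless and converts $C_t^*$-valued statements into $\bC^* \otimes_{E_t^*} E_t^*(-)$-valued ones.

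First, I would invoke the main result of \cite{tgcm}: for a good group $G$, the transchromatic character map induces an isomorphism
\[
C_t^* \otimes_{\E^*} \E^*(BG) \lra{\cong} C_t^* \otimes_{L_{K(t)}\E^*} L_{K(t)}\E^*(\Loops^{n-t}BG).
\]
Base-changing along $C_t^* \to \bC^*$ yields
\[
\bC^* \otimes_{\E^*} \E^*(BG) \lra{\cong} \bC^* \otimes_{L_{K(t)}\E^*} L_{K(t)}\E^*(\Loops^{n-t}BG).
\]
Next, using the standard decomposition $\Loops^{n-t}BG \simeq \coprod_{[\al]} BC_G(\al)$ indexed by the finitely many conjugacy classes of continuous homomorphisms $\Z_p^{n-t}\to G$ (equivalently, of commuting $(n-t)$-tuples of prime-power order elements), cohomology turns the disjoint union into a finite product. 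On each factor, the centralizer isomorphism
\[
\bC^* \otimes_{L_{K(t)}\E^*} L_{K(t)}\E^*(BC_G(\al)) \cong \bC^* \otimes_{E_t^*} E_t^*(BC_G(\al)),
\]
established in the preceding section for centralizers of commuting prime-power order elements, lets me rewrite the right-hand side as $\bC^* \otimes_{E_t^*} E_t^*(\Loops^{n-t}BG)$, completing the identification.

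The main obstacle is verifying that, under this chain of identifications, the composite actually agrees with the modified character map constructed in the introduction; i.e., that the appropriate square of character maps commutes. This is largely a formal compatibility check, tracking how the $E_\infty$-ring structures on $C_t$, $E_t$, and $\bC$ fit together and how $K(t)$-localization interacts with the smash products and base changes involved. The substantive input is the centralizer isomorphism from the preceding section, together with the transchromatic character theorem of \cite{tgcm}; the main theorem then assembles these two ingredients.
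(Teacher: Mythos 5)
Your proof is correct and takes essentially the same approach as the paper: apply the transchromatic character theorem of \cite{tgcm}, base change to $\bC^*$, decompose $\Loops^{n-t}BG$ into centralizers $BC_G(\al)$, and invoke the centralizer isomorphism (which the paper obtains by combining \myref{compcbarlgood} and \myref{compcbaregood}, using goodness of $G$ and faithfully flat descent to establish projectivity of $\LE^*(BC(\im\al))$). The compatibility with the modified character map is, as you note, a formal matter of tracking the $E_\infty$-ring maps, which the paper likewise treats implicitly.
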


This allows us to reduce certain height $n$ problems to height $t$ problems without introducing more exotic cohomology theories. For instance, an argument using faithfully flat descent for finitely generated projective modules proves \myref{goodcentralizers} (centralizers in good groups are good) from this result. The paper ends with a brief summary of what is known about good groups and a new example. 

\subsection*{Acknowledgements} We thank Omar Antol\'in Camarena, Mark Behrens, Tyler Lawson, Eric Peterson, Tomer Schlank, and Bj{\"o}rn Schuster for many helpful conversations. The second author was partially supported by NSF grant DMS-0943787.

\section{Arithmetic localization and completion}\label{arithmeticlocalization}

In this section we summarize the Greenlees--May theory of localization and completion in topology, as developed in \cite{greenleesmay95}, using some insights from \cite{dag12}.

\subsection{Construction of localization and completion}

Let $R$ be an $\Es{\infty}$-ring spectrum, so that the category $\Mod_R$ of $R$-modules has a symmetric monoidal structure, where the smash product over $R$ will be denoted $\sm$. Note that most of the arguments work as well for $\Es{2}$-ring spectra, but we will not need this extra generality here. 

Let $I \subseteq \pi_*R$ be a finitely generated ideal with a minimal set of generators $\{x_1,\ldots,x_n \}$. It is possible to weaken this hypothesis as in \cite{greenleesmay92}, but for simplicity we will restrict ourselves to the finitely generated situation. 

\begin{definition}
A module $M \in \Mod_R$ is called $I$-nilpotent if $I \subseteq \mathrm{supp}(m) = \{r \in \pi_0R\mid \exists n\colon r^nm=0\}$ for all $m \in \pi_*M$. This condition is equivalent to $M[1/x] = 0$ for all $x \in I$.
\end{definition}

If we define the Koszul complex as $\Kos(I) = \bigwedge_{i=1}^n\Kos(x_i)$ with $\Kos(x_i) = \fib(R \to R[1/x_i])$, then we can construct the fundamental cofiber sequence

\[\Kos(I) \longrightarrow R \longrightarrow \check C(I). \]

\begin{definition}
The right adjoint to the inclusion functor $\Mod_R^{I-\text{nil}} \hookrightarrow \Mod_R$ is given by $\Gamma_I = - \otimes \Kos(I)$. 
\end{definition} 

It follows easily that $\Mod_R^{I-\text{nil}}$ is compactly generated by $\bigwedge_{i=1}^n\, \cof(R \xrightarrow{x_i} R)$. For example, $\Mod_{S_{(p)}^0}^{(p)-\text{nilp}} \simeq \Mod_{S_{(p)}^0}^{\Q-\text{acyclic}}$ is generated as a localizing subcategory by $S^0/p$.

If $\cC$ is a full stable subcategory of $\Mod_R$, its left orthogonal is defined as the full subcategory of $\Mod_R$ on those objects $N$ for which $\Hom(M,N) = 0$ for all $M \in \cC$. 

\begin{definition}
$\Mod_R^{I-\text{loc}}$ is the left orthogonal to $\Mod_R^{I-\text{nil}}$, i.e., $M \in \Mod_R^{I-\text{loc}}$ if and only if $\Hom_R(N,M)=0$ for all $N \in \Mod_R^{I-\text{nil}}$.
\end{definition}

\begin{lemma}
The inclusion functor $\Mod_R^{I-\text{loc}} \hookrightarrow \Mod_R$ admits a left adjoint, given by $I$-localization $L_I= - \sm \check C(I)$, also written as $(-)[I^{-1}]$. In particular, this gives rise to a fiber sequence of functors
\[\Gamma_I \longrightarrow \mathrm{id} \longrightarrow L_I.\]
\end{lemma}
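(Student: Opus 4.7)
The plan is to construct the cofiber sequence and verify the adjunction simultaneously by smashing the fundamental cofiber sequence $\Kos(I) \to R \to \check{C}(I)$ with an arbitrary $M \in \Mod_R$. This immediately produces a natural cofiber sequence $\Gamma_I(M) \to M \to L_I(M)$ with $L_I(M) := M \sm \check{C}(I)$, supplying both the candidate localization functor and the asserted fiber sequence $\Gamma_I \to \mathrm{id} \to L_I$ of functors.

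The main content is then to verify that $L_I(M) \in \Mod_R^{I-\text{loc}}$ for every $M$. Once this is known, the universal property of $L_I$ follows by a formal argument: for any $N \in \Mod_R^{I-\text{loc}}$, applying $\Hom_R(-, N)$ to the cofiber sequence annihilates the contribution from $\Gamma_I(M) \in \Mod_R^{I-\text{nil}}$, yielding an equivalence $\Hom_R(L_I M, N) \simeq \Hom_R(M, N)$. To show $L_I(M)$ is $I$-local, it suffices to prove $\Gamma_I L_I \simeq 0$, and since $\Gamma_I L_I(M) = M \sm \Kos(I) \sm \check{C}(I)$, this reduces to the smash-idempotence statement
\[ \Kos(I) \sm \check{C}(I) \simeq 0. \]

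I would prove this by induction on the number of generators $n$. For $n=1$ with $I = (x)$, $\check{C}(I) = R[1/x]$, and smashing $\Kos(x) \to R \to R[1/x]$ with $R[1/x]$ gives a cofiber sequence in which the multiplication map $R[1/x] \sm R[1/x] \to R[1/x]$ is an equivalence by the standard smash-idempotence of inverting a single element; hence $R[1/x] \sm \Kos(x) \simeq 0$. For the inductive step, write $I = I' + (x_n)$ with $I' = (x_1, \ldots, x_{n-1})$. The octahedral axiom applied to the composable pair $\Kos(I) = \Kos(I') \sm \Kos(x_n) \to \Kos(I') \to R$ produces a cofiber sequence
\[ \Kos(I') \sm R[1/x_n] \to \check{C}(I) \to \check{C}(I'). \]
Smashing with $\Kos(I) = \Kos(I') \sm \Kos(x_n)$, the left term vanishes by the base case (supplying a factor $R[1/x_n] \sm \Kos(x_n) \simeq 0$) and the right term vanishes by the inductive hypothesis $\Kos(I') \sm \check{C}(I') \simeq 0$, which closes the induction. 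The main obstacle is precisely this octahedral identification together with tracking the maps carefully enough that the smash-product vanishings apply; with it in hand, the fiber sequence $\Gamma_I \to \mathrm{id} \to L_I$ of the statement is built into the construction.
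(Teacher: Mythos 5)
The paper states this lemma without proof, treating it as part of the Greenlees--May formalism it summarizes at the start of Section 2; there is no in-paper argument to compare against. Your proof is correct and complete as a filling-in of that gap. The reduction of the whole adjunction to the single smash-vanishing statement $\Kos(I) \sm \check C(I) \simeq 0$ is the right skeleton: once $\Gamma_I L_I \simeq 0$ holds, the adjunction for $\Gamma_I$ (right adjoint to the inclusion of $I$-nilpotent objects) gives $\Hom_R(X, L_I M) \simeq \Hom(X, \Gamma_I L_I M) = 0$ for $X \in \Mod_R^{I-\text{nil}}$, so $L_I M$ is $I$-local; and then applying $\Hom_R(-,N)$ for $N \in \Mod_R^{I-\text{loc}}$ to the cofiber sequence $\Gamma_I M \to M \to L_I M$ annihilates the $\Gamma_I M$ term (which is $I$-nilpotent, as the paper has already built into the description of $\Gamma_I$) and produces the required natural equivalence $\Hom_R(L_I M, N) \simeq \Hom_R(M,N)$, establishing the left adjoint.

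Your inductive proof of the key vanishing via the octahedral axiom is also correct, and all the terms are identified accurately. For the record, there is a slightly shorter route that sidesteps the induction: $\check C(I)$ can be presented as the total cofiber of the punctured $n$-cube with vertices $R[1/(x_{i_1}\cdots x_{i_k})]$, so $\Kos(I) \sm \check C(I)$ is a finite colimit of terms $\Kos(I) \sm R[1/(x_{i_1}\cdots x_{i_k})]$, each of which contains a factor $\Kos(x_{i_1}) \sm R[1/(x_{i_1}\cdots x_{i_k})] \simeq 0$ because $R[1/(x_{i_1}\cdots x_{i_k})]$ is a further localization of $R[1/x_{i_1}]$. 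Either argument is sound; yours has the advantage of using only the cofiber sequence the paper has already set up.
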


\begin{definition}
$\Mod_R^{I-\text{comp}}$ is left orthogonal to $\Mod_R^{I-\text{loc}}$. 
\end{definition}

Equivalently, an $R$-module $M$ is $I$-complete if and only if $\lim (\ldots \xrightarrow{x} M \xrightarrow{x} M) =0$ for all $x \in I$, as shown in \cite[Cor. 4.2.8]{dag12}

\begin{lemma}
The $I$-adic completion functor is defined as $(-)_I^{\wedge} = \Hom_R(\Kos(I),-)$, and it is left adjoint to the inclusion functor $\Mod_R^{I-\text{comp}} \hookrightarrow \Mod_R$.
\end{lemma}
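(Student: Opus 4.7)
The plan is to produce the unit map $M \to M_I^\wedge$ from the fundamental cofiber sequence and then verify the two conditions characterizing a reflection onto $\Mod_R^{I-\text{comp}}$: that the target of the unit is itself $I$-complete, and that mapping the unit into any $I$-complete object is an equivalence. Applying the contravariant functor $\Hom_R(-,M)$ to the fundamental cofiber sequence $\Kos(I)\to R \to \check C(I)$ produces a fiber sequence
\[
\Hom_R(\check C(I),M) \longrightarrow M \longrightarrow \Hom_R(\Kos(I),M) = M_I^\wedge,
\]
which simultaneously defines the completion functor and its unit map.

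For the completeness of $M_I^\wedge$, I would invoke the tensor-hom adjunction together with the identification $\Gamma_I = -\sm \Kos(I)$. For any $N \in \Mod_R^{I-\text{loc}}$,
\[
\Hom_R(N, M_I^\wedge) \simeq \Hom_R(N\sm \Kos(I), M) \simeq \Hom_R(\Gamma_I N, M),
\]
and the fiber sequence $\Gamma_I N \to N \to L_I N$ together with the fact that $L_I$ is the identity on $I$-local modules forces $\Gamma_I N \simeq 0$. Hence the right-hand side vanishes and $M_I^\wedge$ lies in $\Mod_R^{I-\text{comp}}$.

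For the adjunction itself, it suffices to show that for any $I$-complete $N$ the induced map $\Hom_R(M_I^\wedge, N) \to \Hom_R(M, N)$ is an equivalence, which by the defining fiber sequence above reduces to $\Hom_R(F, N) = 0$, where $F := \Hom_R(\check C(I), M)$. By the definition of $\Mod_R^{I-\text{comp}}$ as the left orthogonal of $\Mod_R^{I-\text{loc}}$, this holds provided $F$ is itself $I$-local. A second application of tensor-hom gives, for any $I$-nilpotent $P$,
\[
\Hom_R(P, F) \simeq \Hom_R(P\sm \check C(I), M) \simeq \Hom_R(L_I P, M) = 0,
\]
since $L_I P \simeq 0$ whenever $P$ is $I$-nilpotent. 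The only subtle point in the plan is keeping track of the direction of the left-orthogonal convention adopted in the excerpt and of the identification $\check C(I)\simeq L_I R$; once these are pinned down, the proof is essentially two applications of the tensor-hom adjunction against the universal fiber sequences $\Gamma_I \to \mathrm{id} \to L_I$ and $\Kos(I)\to R\to \check C(I)$.
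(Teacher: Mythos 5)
Your proof is correct. The paper states this lemma without proof, deferring to \cite{greenleesmay95} and \cite{dag12}; your argument is exactly the standard verification. You correctly observe that both required vanishings reduce, via tensor-hom adjunction against the two fundamental fiber sequences $\Kos(I)\to R\to\check C(I)$ and $\Gamma_I\to\mathrm{id}\to L_I$, to the facts that $\Gamma_I$ annihilates $I$-local objects and $L_I$ annihilates $I$-nilpotent objects. The one orientation issue you flag -- the paper's ``left orthogonal'' convention, under which $N$ is in the left orthogonal of $\cC$ precisely when $\Hom(M,N)=0$ for $M\in\cC$ -- you have tracked correctly in both uses: $I$-completeness of $M_I^\wedge$ requires $\Hom_R(N,M_I^\wedge)=0$ for $I$-local $N$, and $I$-locality of $F=\Hom_R(\check C(I),M)$ requires $\Hom_R(P,F)=0$ for $I$-nilpotent $P$, which is what you check.
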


As a special case, $(-)_{(x)}^{\wedge} \simeq \lim_s (-/x^s)$, which coincides with the familiar construction of completion. 

The following diagram summarizes our discussion of arithmetic localization and completion,

\[\xymatrix{& \Mod_R^{I-\text{loc}} \ar@<0.5ex>[d] \ar@{-->}@/^1.5pc/[ddr] \\
& \Mod_R \ar@<0.5ex>[u]^{L_I} \ar@<0.5ex>[ld]^{\Gamma_I} \ar@<0.5ex>[rd]^{(-)_I^{\wedge}} \\
\Mod_R^{I-\text{nil}} \ar@<0.5ex>[ru] \ar[rr]_{\sim} \ar@{-->}@/^1.5pc/[ruu] & & \Mod_R^{I-\text{comp}} \ar@<0.5ex>[lu]}\]
where the dotted arrows indicate left orthogonality. This reduces to the usual derived functors of localization and completion upon specialization to Eilenberg--Mac Lane spectra.

\subsection{Bousfield localization}

Following \cite{ekmm}, we will work in the category of $\Mod_R$ of modules over an $\Es{\infty}$-ring spectrum $R$. Let $E$ be an $R$-module. 

\begin{definition}
An $R$-module $X$ is called $E$-acyclic if $X \sm E = 0$, and $Y \in \Mod_R$ is called $E$-local if $\Hom(X,Y) = 0$ for all $E$-acyclic $X$. A morphism $f$ is an $E$-equivalence if $f \sm E$ is an equivalence. 
\end{definition}

The following fundamental result was proven by Bousfield \cite{Bousfieldlocalization}.

\begin{theorem}
There exists a functor $L_E\colon \Mod_R \to \Mod_R$ together with a natural transformation $\mathrm{id} \to L_E$ such that $X \to L_EX$ is an $E$-equivalence with $E$-local target for all $X$. Equivalently, $X \to L_EX$ is the initial map into an $E$-local object. 
\end{theorem}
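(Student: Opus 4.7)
The plan is to follow Bousfield's original argument, whose essential content is a cardinality bound reducing the construction of $L_E$ to the small object argument. Since $\Mod_R$ is a combinatorial stable model category (or equivalently a presentable stable $\infty$-category), the class of $E$-acyclic objects is closed under arbitrary homotopy colimits, suspensions, and extensions. Thus, once we produce a \emph{set} of $E$-acyclic objects that generates all $E$-acyclics as a localizing subcategory, the remaining construction is formal.

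First I would fix a regular cardinal $\kappa$ strictly larger than the cardinalities of $\pi_*R$ and $\pi_*E$ and the number of cells in a chosen CW replacement of $E$. The key lemma to establish is that every $E$-acyclic $R$-module $X$ is the filtered colimit of its $\kappa$-small $E$-acyclic submodules. To show this, given any $\kappa$-small submodule $Y \subseteq X$, I would enlarge it transfinitely: since $Y \sm E$ is $\kappa$-small and $X \sm E \simeq 0$, every homotopy class in $\pi_*(Y \sm E)$ is killed after smashing with $E$ inside some $\kappa$-small extension $Y'$ of $Y$ in $X$. Iterating through $\kappa$ many steps (and closing under the cardinality bound) produces a $\kappa$-small $E$-acyclic submodule containing $Y$, which together with exhaustion gives the generation result. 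This yields a set $\mathcal{S}$ of generating $E$-acyclics, namely representatives of equivalence classes of $\kappa$-small $E$-acyclic $R$-modules.

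With $\mathcal{S}$ in hand, I would construct $L_E X$ by a transfinite small object argument: iteratively form the cofiber of $\bigvee_{A \in \mathcal{S}, f\colon A \to X_\alpha} A \to X_\alpha$, then colimit over a sufficiently large ordinal. At each stage the cofiber is obtained by attaching an $E$-acyclic object, so $X \to L_E X$ is an $E$-equivalence; the smallness of $\mathcal{S}$ combined with transfinite composition ensures the colimit is $E$-local, since any map from an $E$-acyclic object (which by the key lemma is a colimit of objects in $\mathcal{S}$) factors through a stage where it has been coned off. Naturality of the construction gives the functor $L_E$ and the natural transformation. The main obstacle is the cardinality/generation step, which is the core of Bousfield's argument; the formal consequences, including the universal property identifying $X \to L_E X$ as initial into $E$-local objects, follow from the left orthogonality framework developed in the preceding subsection.
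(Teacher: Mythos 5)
The paper itself does not prove this theorem: it is stated as a citation to Bousfield's original work (and in the $\Mod_R$ setting, to the treatment in EKMM), so there is no in-paper proof against which to compare. Your outline is the standard Bousfield--EKMM argument and it is essentially correct: a cardinality bound shows that $E$-acyclics are generated under colimits by a set of small $E$-acyclics, after which a transfinite small object argument produces $L_E$ with the stated universal property. Two technical points deserve more care than your sketch gives them. First, in $\Mod_R$ one should not speak of ``submodules'' of a spectrum; the correct device (as in EKMM, Ch.~VIII) is cell $R$-modules and $\kappa$-small cell subcomplexes, and one must verify that a cofibrant replacement and an exhaustive filtration by such subcomplexes exist. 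Second, the cardinal $\kappa$ must be chosen so that smashing a $\kappa$-small cell module with $E$ produces a module with fewer than $\kappa$ homotopy classes; this uses that $E$ itself is a cell $R$-module with $<\kappa$ cells and that $\pi_*R$ and $\pi_*E$ have cardinality $<\kappa$, which is what you wrote, but the verification that the class of $\kappa$-small $E$-acyclics is closed under the transfinite enlargement procedure is where the argument actually lives, and it should be spelled out rather than waved at. With those repairs the proposal reproduces the cited proof.
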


Recall also that a localization functor $L$ is called smashing if, for all $M \in \Mod_R$, $LM = M \sm LR$. As in \cite{greenleesmay95}, we now identify the arithmetic localization and completion functors encountered earlier as special cases of Bousfield localization. 

\begin{proposition}\mylabel{locbl}
Let $R$ be an $\Es{\infty}$-ring spectrum with Noetherian coefficients and let $I$ be an ideal in $\pi_*R$, then the following holds.  
\begin{enumerate}
 \item $L_I$ is the smashing Bousfield localization with respect to $\check{C}(I)$.
 \item There is a spectral sequence $E_{p,q}^2 = \check CH_I^{-p,-q}(\pi_*M) \Rightarrow \pi_{p+q}(L_IM)$. Here, $\check CH_I^*$ denotes $\mathrm{\check{C}ech}$ cohomology with respect to $I$ as defined in \cite{greenleesmay95}.
\end{enumerate}
\end{proposition}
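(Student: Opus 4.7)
The proof naturally splits into two parts, both resting on the fundamental cofiber sequence $\Kos(I) \to R \to \check{C}(I)$ constructed in the previous subsection.

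For part (1), the plan is to identify the $\check{C}(I)$-acyclic modules with the $I$-nilpotent ones. Smashing the fundamental cofiber sequence with $M$ yields
\[
\Gamma_I M \longrightarrow M \longrightarrow M \sm \check{C}(I).
\]
If $M$ is $I$-nilpotent, then $\Gamma_I M \xrightarrow{\sim} M$ since $\Gamma_I$ is the coreflection onto $\Mod_R^{I-\text{nil}}$, forcing $M \sm \check{C}(I) \simeq 0$. Conversely, $M \sm \check{C}(I) \simeq 0$ forces $M \simeq \Gamma_I M$, placing $M$ in the essential image of $\Gamma_I$, which is $\Mod_R^{I-\text{nil}}$. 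Hence the $\check{C}(I)$-acyclics and $I$-nilpotents agree, and so do their left orthogonals: $\Mod_R^{\check{C}(I)-\text{loc}} = \Mod_R^{I-\text{loc}}$. The natural map $M \to M \sm \check{C}(I)$ has $\check{C}(I)$-acyclic fiber $\Gamma_I M$ and $\check{C}(I)$-local target, hence realizes Bousfield localization; the identity $L_I M = M \sm \check{C}(I) = M \sm L_I R$ is built into the definition, so $L_I$ is smashing.

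For part (2), the plan is to resolve $\check{C}(I)$ by a cubical diagram of localizations. The definition $\Kos(I) = \bigwedge_{i=1}^n \Kos(x_i)$ exhibits $\Kos(I)$ as the total fiber of the $n$-cube whose vertex at $S \subseteq \{1,\ldots,n\}$ is $R[1/x_S]$ with $x_S = \prod_{i \in S}x_i$; dually, $\check{C}(I)$ is the total cofiber of the punctured $n$-cube on the non-empty $S$. Smashing this cube with $M$ and filtering by the cardinality $|S|$ produces a spectral sequence whose $E_1$-page reads
\[
E_1^{p,q} = \bigoplus_{|S|=p} \pi_q\bigl(M[1/x_S]\bigr),
\]
with differential the alternating sum of the canonical localization maps. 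By the definition in \cite{greenleesmay95} this is exactly the Čech complex of $(\pi_* M, I)$, so taking cohomology yields the bigraded module $\check{C}H_I^{\bullet,\bullet}(\pi_*M)$; after reindexing one obtains the stated $E_{p,q}^2 = \check{C}H_I^{-p,-q}(\pi_*M)$ converging to $\pi_{p+q}(M \sm \check{C}(I)) = \pi_{p+q}(L_I M)$. Strong convergence is automatic because the cube is finite ($n < \infty$).

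The main technical obstacle is matching the conventions of the cubical filtration with Greenlees and May's algebraic bigraded Čech cohomology, so that the differentials and gradings line up on the nose; once this bookkeeping is done, the Noetherian hypothesis on $\pi_* R$ ensures all the algebraic identifications (such as the interaction of $\pi_*$ with finite sums of localizations) proceed without additional derived corrections, and part (1) takes care of the abutment.
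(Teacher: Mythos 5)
The paper does not actually supply a proof of this proposition: it is quoted verbatim from Greenlees--May \cite{greenleesmay95}, as signaled by the sentence preceding it, and no proof environment follows. So the right comparison is with the standard argument from that reference, which your reconstruction faithfully reproduces.

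Part (1) is correct and complete: you identify $\check{C}(I)$-acyclics with $I$-nilpotent modules via the fundamental cofiber sequence, observe that the map $M \to M\sm\check{C}(I)$ has acyclic fiber $\Gamma_I M$ and local target, and conclude that $L_I = -\sm\check{C}(I)$ is the $\check{C}(I)$-Bousfield localization; smashingness is then built into the formula. Part (2) is also the right argument: $\check{C}(I)$ carries a finite filtration (coming from the ``weight'' filtration on $\Kos(I) = \Kos(x_1)\sm\cdots\sm\Kos(x_n)$ or, equivalently, from expressing $\check{C}(I)$ as the homotopy limit over the punctured cube), whose associated graded after smashing with $M$ and applying $\pi_*$ gives the algebraic \v Cech cochain complex, yielding the stated spectral sequence, which converges strongly because the filtration has finite length. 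You correctly flag the convention-matching as the only real bookkeeping to do.

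Two small points worth tightening. First, the phrase ``$\check{C}(I)$ is the total cofiber of the punctured $n$-cube'' is not quite right terminologically: total cofibers are defined for full cubes, not punctured ones, and in any case what you want here is that $\check{C}(I)$ is the \emph{homotopy limit} over the punctured cube (equivalently, the totalization of the \v Cech-type complex $\bigoplus_{|S|=1} R[1/x_S] \to \bigoplus_{|S|=2} R[1/x_S] \to \cdots$). The idea is clearly what you mean, but the wording should be fixed. Second, Noetherianity of $\pi_*R$ is not what ensures $\pi_*(M[1/x_S]) \cong (\pi_*M)[1/x_S]$ or the collapse of the relevant identifications; those facts hold because $\pi_*$ commutes with filtered colimits and the filtration is finite. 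The Noetherian hypothesis in the statement is inherited from the paper's standing conventions and plays its role elsewhere in the Greenlees--May machinery (notably in the local homology/completion side, cf.\ \myref{compbl}), not in this particular argument.
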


We will be mainly interested in completion. Recall that algebraic $I$-completion is not exact on the category of all $R_*$-modules, but we can consider its left derived functors $L_s^I = \mathbb{L}^s(-)_I^{\wedge}$. These will be studied in more detail in Section \ref{lcompletion}.

\begin{proposition}\mylabel{compbl}
Let $R$ be a commutative $S$-algebra with Noetherian coefficients and let $I$ be an ideal in $\pi_*R$, then the following holds.  
\begin{enumerate}
 \item $(-)_I^{\wedge}$ is Bousfield localization with respect to $\Kos(I)$. In general, $(-)_I^{\wedge}$ is not smashing. 
 \item There is a spectral sequence $E_{s,t}^2 =  L_s(\pi_*M)_t \Rightarrow \pi_{s+t}(M_I^{\wedge})$, where $L_s$ denotes the $s$-th left derived functor of ordinary $I$-adic completion.   
\end{enumerate}
\end{proposition}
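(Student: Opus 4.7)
The plan is to transport the identifications of Greenlees--May to the stable category of $R$-modules, using the characterisation of Bousfield localisation via orthogonal classes.

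For part (1), the key step is to recognise the class of $\Kos(I)$-acyclic modules as exactly $\Mod_R^{I-\text{loc}}$. Since $(-)\sm\Kos(I) \simeq \Gamma_I$ by the earlier construction, the fibre sequence $\Gamma_I \to \mathrm{id} \to L_I$ shows that $X \sm \Kos(I) \simeq 0$ if and only if $X \xrightarrow{\sim} L_IX$, which by definition means $X \in \Mod_R^{I-\text{loc}}$. Consequently the Bousfield localisation $L_{\Kos(I)}$ is a reflection onto the left orthogonal of $\Mod_R^{I-\text{loc}}$, which is $\Mod_R^{I-\text{comp}}$; by uniqueness of such reflections this agrees with $(-)_I^{\wedge}$. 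To see that completion is not in general smashing, it is enough to exhibit a single module where the assembly map $M \sm R_I^{\wedge} \to M_I^{\wedge}$ fails to be an equivalence; an infinite free $R$-module $\bigoplus_J R$ works, since $I$-adic completion of a free module of infinite rank is strictly larger than the direct sum of completions of its summands.

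For part (2), the strategy is to rewrite the completion as a concrete inverse limit and run the Bousfield--Kan spectral sequence of the tower. Using $\Kos(x_i) \simeq \mathrm{hocolim}_n\, \Sigma^{-1} R/x_i^n$ and smashing over $i$, one obtains $\Kos(I) \simeq \mathrm{hocolim}_n\, \Sigma^{-k} R/\mathbf{x}^n$ for $\mathbf{x}^n = (x_1^n, \ldots, x_k^n)$, where $R/\mathbf{x}^n$ denotes the associated Koszul spectrum. Koszul self-duality then gives
\[
M_I^{\wedge} \simeq \Hom_R(\Kos(I), M) \simeq \mathrm{holim}_n\, M \sm R/\mathbf{x}^n.
\]
The Bousfield--Kan spectral sequence of this tower has $E_2$-term built from $\lim^s_n \pi_t(M \sm R/\mathbf{x}^n)$ converging to $\pi_*(M_I^{\wedge})$. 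One then identifies $\pi_*(M \sm R/\mathbf{x}^n)$ via the classical Koszul spectral sequence (which degenerates appropriately using Noetherianity of $\pi_*R$) and matches the resulting $\lim^s$ with the left derived functors $L_s(-)_I^{\wedge}$ of algebraic $I$-adic completion, recovering the stated spectral sequence.

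The main obstacle is this final identification of the $\lim^s$-term with the honest algebraic derived functors of completion. It rests on the Artin--Rees lemma, which lets one compare the $\mathbf{x}^n$-tower with the $I^n$-tower up to pro-isomorphism, together with the good behaviour of Koszul complexes over Noetherian rings that collapses the intermediate bicomplex of Koszul $\Tor$-groups into a single column. Without the Noetherian hypothesis these comparisons fail and the $E_2$-page does not simplify into the clean form involving $L_s$.
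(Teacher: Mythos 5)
Your argument for part (1) is correct and is essentially what the cited sources do: since $(-)\sm\Kos(I)\simeq\Gamma_I$, the fiber sequence $\Gamma_I\to\mathrm{id}\to L_I$ identifies the $\Kos(I)$-acyclic modules with $\Mod_R^{I-\text{loc}}$, so the $\Kos(I)$-local objects form exactly the left orthogonal $\Mod_R^{I-\text{comp}}$, and the two reflectors onto this subcategory agree by uniqueness of adjoints. The non-smashing example is fine (any infinite coproduct of copies of $R$ will do, since derived $I$-completion does not commute with infinite direct sums). Note, however, that the paper itself supplies no proof of this proposition; it is a recollection citing Greenlees--May and Lurie, so the comparison here is with those sources rather than with an argument in the text.

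Part (2) has a genuine gap. You propose to run the Bousfield--Kan/Milnor spectral sequence of the sequential tower $\{M\sm R/\mathbf{x}^n\}_n$ and read off $E^2_{s,t}=\lim^s_n\pi_t(M\sm R/\mathbf{x}^n)$. But for a tower indexed by $\N$, $\lim^s$ vanishes for all $s\ge2$; the Milnor sequence only ever produces the two columns $\lim^0$ and $\lim^1$. The derived functors $L_s$ of $I$-adic completion, by contrast, are generally nonzero for $0\le s\le k$ where $k$ is the number of generators of $I$, so for $k\ge2$ your proposed $E_2$-page simply cannot equal $(L_s\pi_*M)_t$. Pre-composing with the Koszul spectral sequence for each $\pi_*(M\sm R/\mathbf{x}^n)$ does not repair this: it changes what sits in the two columns, not how many columns there are, and the resulting object is a bicomplex whose totalization must be organized differently to yield a genuine $(k+1)$-column spectral sequence.

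The construction that actually works (this is what Greenlees--May do in \cite{greenleesmay95}, following \cite{greenleesmay92}, and what \cite{dag12} does in a slightly different language) filters $\Kos(I)$ itself by ``Koszul chain degree'': since $\Kos(I)=\bigwedge_{i=1}^k\Kos(x_i)$ and each $\Kos(x_i)$ sits in a cofiber sequence $\Sigma^{-1}R[1/x_i]\to\Kos(x_i)\to R$, the $k$-fold smash inherits a finite filtration with $k+1$ graded pieces, each a wedge of suspended localizations of $R$. Applying $\Hom_R(-,M)$ and taking homotopy groups produces a first-quadrant spectral sequence whose $E_1$-page is the algebraic stable Koszul complex of $\pi_*M$ and whose $d_1$ is the Koszul differential, so that $E_2$ is local homology $H^I_s(\pi_*M)$. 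The remaining nontrivial ingredient --- and the place where Noetherianity (or more generally pro-regularity of $I$, via Artin--Rees) enters --- is the Greenlees--May theorem identifying $H^I_s$ with the left derived functors $L_s$ of ordinary $I$-adic completion; you correctly flag the role of Artin--Rees here, but without the finite Koszul filtration you never generate the higher columns that this identification is supposed to populate.
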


\begin{remark}
More generally, the $E^2$-term of the above spectral sequence can be identified with the local homology of groups of $\pi_*M$ with respect to $I$, $E_{s,t}^2 = H_{s,t}^I(\pi_*M)$.
\end{remark}

\section{Localization and completion of $MU$-modules}

The goal of this section is to show that the restrictions of certain Bousfield localization functors appearing in chromatic homotopy theory to $MU$-modules can be expressed as combinations of the arithmetic functors of Section \ref{arithmeticlocalization}. This is certainly well-known to experts, but since there is no published reference for these results, we include the proofs. 

Moreover, the same techniques allow us to study the effect of $K(n)$-localization on coefficients, which admits an explicit description for flat modules. 

\subsection{Recollections}

Fix a prime $p$, and let $E_n$ and $K(n)$ denote Morava $E$-theory and Morava $K$-theory at height $n$, respectively. Recall that $E_n$ is a Landweber exact $\Es{\infty}$-ring spectrum with coefficients $E_n^*=\W k\llbracket u_1,\ldots,u_{n-1}\rrbracket [u^{\pm 1}]$, where $\W k$ is the ring of Witt vectors of $k = \F_{p^n}$ and $u$ has degree $2$. The spectrum representing Morava $K$-theory is a complex orientable $\As{\infty}$-ring spectrum with $K(n)_* = \F_{p^n}[v_n^{\pm 1}]$ with $v_n$ of degree $2(p^n-1)$.

These spectra come with associated Bousfield localization functors $L_n$ and $L_{K(n)}$ that play a fundamental role in the chromatic approach to stable homotopy theory. We recall two important relations between these functors. 

\begin{itemize}
 \item $L_n = L_{\bigvee_{i=0}^n K(i)}$
 \item There is a homotopy pullback square of functors on spectra
 \[\xymatrix{L_{n} \ar[r] \ar[d] & L_{K(n)} \ar[d] \\
 L_{n-1} \ar[r] & L_{n-1}L_{K(n)},}\]
 usually called the chromatic fracture square. 
\end{itemize}

The $n$-th monochromatic layer $M_n\colon \Sp \to \Sp$ is defined as the fiber $M_n = \fib(L_n \to L_{n-1})$. By the smash product theorem of Hopkins and Ravenel, $L_n$ and hence $M_n$ are smashing for all $n$, whereas $L_{K(n)}$ does not have this property. Moreover, Hovey and Strickland provide a convenient description of $K(n)$-localization.

\begin{proposition}\mylabel{kchar}
For any spectrum $X$ and $n\ge0$, there is an equivalence
\[L_{K(n)}X = \Hom_{S^0}(M_nS^0,L_nX).\]
\end{proposition}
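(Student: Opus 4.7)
The plan is to identify $F(M_n S^0, L_n X)$ with $L_{K(n)} X$ by pushing the mapping space $[W, F(M_n S^0, L_n X)]$ through a chain of natural bijections and then invoking the Yoneda lemma. The starting point is the observation that, since $L_n$ is smashing (by the smash product theorem of Hopkins--Ravenel), its fiber $M_n$ is also smashing: $M_n Y \simeq Y \wedge M_n S^0$. Consequently, smashing with $M_n S^0$ has right adjoint $F(M_n S^0, -)$, so by adjunction one has a natural identification $[W, F(M_n S^0, L_n X)] \cong [M_n W, L_n X]$.

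Next I would reduce the right-hand side to a computation in the monochromatic layer. Any map $M_n W \to L_n X$ post-composed with the localization $L_n X \to L_{n-1} X$ is null, since $M_n W$ is $L_{n-1}$-acyclic (it lies in the fiber of $L_n \to L_{n-1}$ applied to $W$). The long exact sequence attached to the fiber sequence $M_n X \to L_n X \to L_{n-1} X$ together with the vanishing of $[M_n W, \Sigma^k L_{n-1} X]$ for $k \in \{0,-1\}$ then shows that every map $M_n W \to L_n X$ factors uniquely through $M_n X$, giving $[M_n W, L_n X] \cong [M_n W, M_n X]$.

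The final step is to invoke the Hovey--Strickland theorem that $M_n$ and $L_{K(n)}$ restrict to mutually inverse equivalences between the full subcategories of monochromatic and $K(n)$-local spectra. Under this equivalence, $[M_n W, M_n X] \cong [L_{K(n)} W, L_{K(n)} X]$, and since the target $L_{K(n)} X$ is $K(n)$-local the latter simplifies to $[W, L_{K(n)} X]$. Stringing these natural isomorphisms together yields $[W, F(M_n S^0, L_n X)] \cong [W, L_{K(n)} X]$ for all $W$, and the Yoneda lemma produces the desired equivalence.

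The main obstacle is the appeal to the monochromatic/$K(n)$-local equivalence in step three; this is the nontrivial input from Hovey--Strickland and relies in turn on the chromatic fracture square identifying $M_n X$ with $\fib(L_{K(n)} X \to L_{n-1} L_{K(n)} X)$. Everything else is formal consequence of the adjunction and the fiber sequence defining $M_n$.
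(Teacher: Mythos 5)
The paper does not supply a proof of this proposition; it is stated as a result cited to Hovey--Strickland, whose proof it implicitly invokes. Your argument is correct and is, modulo phrasing, the standard one from that reference: the adjunction $[W, F(M_n S^0, L_n X)] \cong [M_n W, L_n X]$ (using that $L_n$ and $L_{n-1}$ smashing implies $M_n$ smashing), the reduction to $[M_n W, M_n X]$ via the fiber sequence and the $L_{n-1}$-acyclicity of $M_n W$, the Hovey--Strickland equivalence of the monochromatic and $K(n)$-local categories together with $L_{K(n)} M_n W \simeq L_{K(n)} W$, and Yoneda. All the intermediate steps check out; in particular, the vanishing $[M_n W, \Sigma^k L_{n-1} X] = 0$ holds because $L_{n-1} M_n W = 0$ and $\Sigma^k L_{n-1} X$ is $E(n-1)$-local, and the identification $L_{K(n)} M_n W \simeq L_{K(n)} W$ follows from $L_{K(n)} L_{n-1} = 0$ and $L_{K(n)} L_n = L_{K(n)}$.
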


\subsection{Identification of chromatic functors}

In \cite{greenleesmay95}, Greenlees and May provide the starting point of a dictionary between arithmetic and chromatic localization and completion functors on the category of $MU$-modules. Since $BP$ is known to be $\Es{4}$, we could work with $BP$ as well. 

\begin{proposition}\mylabel{chromtoarith}
For $N \in \Mod_{MU}$ and any $t \ge 0$:
\begin{enumerate}
 \item $L_tN = N[I_{t+1}^{-1}] \simeq N \sm \check{C}(I_{t+1})$.
 \item $N_{I_{t}}^{\wedge} \simeq \Hom_{S^0}(\colim_iM_i,N)$, where the $M_i$ form a cofinal sequence of generalized type $t$ Moore spectra. 
\end{enumerate}
Here, $I_t$ denotes the ideal $(p,v_1,\dots, v_{t-1})$.
\end{proposition}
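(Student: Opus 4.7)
My plan is to deduce both statements from the Bousfield-localization descriptions of the arithmetic functors supplied by \myref{locbl} and \myref{compbl}, by comparing thick subcategories on $\Mod_{MU}$.

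For part (1), both $L_t$ (by the smash product theorem of Hopkins--Ravenel) and $L_{I_{t+1}}$ (by \myref{locbl}) are smashing, so it suffices to identify their acyclics as localizing subcategories of $\Mod_{MU}$. The $L_{I_{t+1}}$-acyclics are $\Mod_{MU}^{I_{t+1}-\text{nil}}$, which is generated by $MU/I_{t+1}$. First I verify that $MU/I_{t+1}$ is $L_t$-acyclic: presenting it as an iterated $MU$-module cofiber of multiplication by $p = v_0, v_1, \ldots, v_t$, for each $i \in \{0, \ldots, t\}$ the element $v_i$ is simultaneously zero on $\pi_*(MU/I_{t+1})$ and a unit in $K(i)_*$, so $K(i) \sm (MU/I_{t+1}) = 0$. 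Consequently every $I_{t+1}$-nilpotent $MU$-module is $L_t$-acyclic.

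For the reverse inclusion I induct on $t$: the base case $t = 0$ is the equivalence of $K(0)$-acyclicity with $(p)$-nilpotence, working $p$-locally. In the inductive step, an $L_t$-acyclic $MU$-module $M$ is automatically $L_{t-1}$-acyclic, hence $I_t$-nilpotent by the inductive hypothesis, and also $K(t)$-acyclic. Passing to $v_t^{-1} M$, which inherits both $I_t$-nilpotence and $K(t)$-acyclicity, Landweber exactness implies that a nonzero $MU$-module that is $I_t$-nilpotent and $v_t$-periodic has nonzero $K(t)$-homology; this forces $v_t^{-1}M = 0$, so $v_t$ acts locally nilpotently on $M$ and $M$ is $I_{t+1}$-nilpotent.

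For part (2), \myref{compbl} identifies $N_{I_t}^{\wedge} \simeq \Hom_{MU}(\Kos(I_t), N)$, so the task is to rewrite the Koszul object in terms of spectrum-level Moore objects. Each factor unpacks as $\Kos(v_j) \simeq \fib(MU \to MU[v_j^{-1}]) \simeq \colim_i \Sigma^{-1} MU/v_j^i$, and smashing these over $j = 0, \ldots, t-1$ (with $v_0 = p$) expresses $\Kos(I_t) \simeq \colim_i (MU \sm M_i)$ for a cofinal sequence $\{M_i\}$ of generalized type $t$ Moore spectra, obtained by choosing compatible self-maps. The free--forgetful adjunction turns $\Hom_{MU}(MU \sm M_i, N)$ into $\Hom_{S^0}(M_i, N)$, and commuting the inverse limit through Hom delivers $\Hom_{S^0}(\colim_i M_i, N)$.

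The principal obstacle is the reverse inclusion in part (1): extracting $v_t$-nilpotence from $K(t)$-acyclicity of an $MU$-module that is already $I_t$-nilpotent. Once this Morava-$K$-theoretic detection principle is in place, everything else is a direct unwinding of the Koszul and Bousfield machinery already established.
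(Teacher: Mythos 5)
The paper does not supply a proof of this proposition: it is stated as a citation to Greenlees--May \cite{greenleesmay95}, who compute $L_nMU$ and identify it with the arithmetic localization; once $L_tMU \simeq MU[I_{t+1}^{-1}]$ is known, both parts follow formally from the smash product theorem and $\Kos(I_t)$ unwinding. Your proposal instead attempts a self-contained argument by comparing acyclics, which is a legitimate strategy but introduces a step the paper never has to confront.

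The gap is the detection claim at the heart of the reverse inclusion in part (1): ``Landweber exactness implies that a nonzero $MU$-module that is $I_t$-nilpotent and $v_t$-periodic has nonzero $K(t)$-homology.'' Landweber exactness does not give this. The relevant smash product is $K(t)\sm N \simeq (K(t)\sm MU)\sm_{MU} N$, and $K(t)\sm MU$ is \emph{not} flat as a right $MU_*$-module (the structure map $MU_* \to K(t)_*MU$ factors through $MU_*/I_t$), so there is no K\"unneth-type collapse that would let you read off nontriviality of $K(t)\sm N$ from $\pi_*N$. What you actually need is that $K(t)$ detects the height-$t$ monochromatic subcategory of $\Mod_{MU}$ --- equivalently, a Bousfield class comparison of the shape $\langle v_t^{-1}MU/I_t\rangle \le \langle K(t)\rangle$ on $MU$-modules, or the analogue of the telescope conjecture in $\Mod_{MU}$. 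This is true, and it is closely related to the results being proved, but it requires genuine input from Morava/Landweber structure theory (or from the very identity $L_tMU\simeq MU[I_{t+1}^{-1}]$ you are trying to establish, which makes the argument circular as written). You should either cite this detection result explicitly or, better, follow the paper and reduce to the Greenlees--May computation of $\pi_*L_tMU$, after which smashing-ness gives part (1) immediately.

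Part (2) is fine: $\Kos(I_t)$ as an iterated colimit of $MU/(p^{i_0},\dots,v_{t-1}^{i_{t-1}})$, cofinality of generalized Moore spectra, and the free--forgetful adjunction $\Hom_{MU}(MU\sm M_i, N)\simeq \Hom_{S^0}(M_i,N)$ is exactly the intended unwinding. One small caveat worth recording if you write this up: the identification $\Kos(I_t)\simeq \colim_i MU\sm M_i$ holds only up to a fixed suspension (one $\Sigma^{-1}$ for each $\Kos(v_j)$), so the $M_i$ must be understood as suitably shifted, or the statement carries a $\Sigma^{t}$ that the proposition suppresses.
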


\begin{remark}
The obvious analogue of this result hold for the category of $E_n$-modules as well.
\end{remark}

\begin{lemma}
For $N \in \Mod_{MU}$, $M_t(N) \simeq \Gamma_{I_t}(N[v_t^{-1}])$.
\end{lemma}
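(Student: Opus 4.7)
The plan is to rewrite both halves of the defining fiber sequence $M_tN = \fib(L_tN \to L_{t-1}N)$ using the arithmetic identifications from \myref{chromtoarith}, and then compute the fiber of the resulting map of $\check{\mathrm{C}}$ech objects. By \myref{chromtoarith}(1), $L_tN \simeq N \sm \check{C}(I_{t+1})$ and $L_{t-1}N \simeq N \sm \check{C}(I_t)$. Since $I_t \subset I_{t+1}$, the inclusion of ideals induces, via the universal property of $L_{I_t}$-localization applied to the $L_{I_{t+1}}$-local object $L_{I_{t+1}}N$, a natural map $\check{C}(I_{t+1}) \to \check{C}(I_t)$ realizing the chromatic natural transformation $L_t \to L_{t-1}$ after smashing with $N$. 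Because $-\sm N$ preserves fibers, the problem reduces to identifying $\fib(\check{C}(I_{t+1}) \to \check{C}(I_t))$ in $\Mod_{MU}$.

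The next step is to run the fundamental cofiber sequences $\Kos(I_j) \to MU \to \check{C}(I_j)$ for $j=t,t+1$ against each other. They fit into a commutative ladder in which the middle column is the identity on $MU$, so a standard $3\times3$ (equivalently octahedral) argument yields
\[
\fib(\check{C}(I_{t+1}) \to \check{C}(I_t)) \simeq \cof(\Kos(I_{t+1}) \to \Kos(I_t)).
\]
Now I use the multiplicativity of the Koszul complex: since $I_{t+1} = I_t + (v_t)$, one has $\Kos(I_{t+1}) \simeq \Kos(I_t) \sm \Kos(v_t)$, and the projection $\Kos(I_{t+1}) \to \Kos(I_t)$ is obtained by smashing $\Kos(I_t)$ with the canonical map $\Kos(v_t) \to MU$. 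Its cofiber is therefore $\Kos(I_t) \sm \check{C}(v_t) \simeq \Kos(I_t) \sm MU[v_t^{-1}]$. Smashing with $N$ and using $\Gamma_{I_t}(-) = - \sm \Kos(I_t)$ gives
\[
M_tN \simeq N \sm \Kos(I_t) \sm MU[v_t^{-1}] \simeq \Gamma_{I_t}(N[v_t^{-1}]),
\]
which is the claim.

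The only mildly delicate point is the compatibility asserted in the first paragraph: that the map $\check{C}(I_{t+1}) \to \check{C}(I_t)$ arising from the ladder of cofiber sequences coincides, after smashing with $N$, with the Bousfield-localization natural transformation $L_tN \to L_{t-1}N$. This follows from the universal property together with the observation that $L_{I_t}$-local modules are automatically $L_{I_{t+1}}$-local, since $I_t \subset I_{t+1}$ implies $I_{t+1}$-nilpotents are contained in $I_t$-nilpotents, so both candidate maps are initial among maps from $L_{I_{t+1}}N$ to an $L_{I_t}$-local target and must therefore agree.
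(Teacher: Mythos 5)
Your proof is correct and follows essentially the same route as the paper: the paper writes down the $3\times 3$ diagram of fiber sequences relating $\Kos(I_{t+1}) \to MU \to \check{C}(I_{t+1})$, $\Kos(I_t) \to MU \to \check{C}(I_t)$, and their third row, deducing $M_t(-) = (-) \sm \Kos(I_t) \sm \check{C}(\nu_t)$. You merely unpack the bottom-left entry via the multiplicativity $\Kos(I_{t+1}) \simeq \Kos(I_t) \sm \Kos(v_t)$ and spell out the compatibility of the ladder with the natural transformation $L_t \to L_{t-1}$, both of which the paper leaves implicit.
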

\begin{proof}
The following commutative diagram, in which all rows and columns are fiber sequences,
\[\xymatrix{\Kos(I_{t+1}) \ar[r] \ar[d] & MU \ar[r] \ar[d]^{\simeq} & \check C(I_{t+1}) \ar[d] \\
\Kos(I_t) \ar[r] \ar[d] & MU \ar[r] \ar[d] & \check C(I_t) \ar[d] \\
\Kos(I_t) \sm \check C(\nu_t) \ar[r] & 0 \ar[r] & \Sigma \Kos(I_t) \sm \check C(\nu_t)}\]
shows that $M_t(-)  = (-) \sm \Kos(I_t) \sm \check C(\nu_t)$,  since $L_tN \simeq N \sm \check C(I_{t+1})$.
\end{proof}

\begin{proposition}\mylabel{lknexplicit}
If $N \in \Mod_{MU}$, then $L_{K(t)}N \simeq (N[\nu_t^{-1}])_{I_t}^{\wedge}$. 
\end{proposition}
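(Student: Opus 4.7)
The plan is to apply Proposition~\ref{kchar} inside the category of $MU$-modules, simplify using the preceding lemma, and then compare $L_t N$ with $N[v_t^{-1}]$ after $I_t$-completion.

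First I would convert the formula of Proposition~\ref{kchar} into an $MU$-linear statement. Since $L_t$ and hence $M_t$ are smashing, $M_t S^0 \sm MU \simeq M_t MU$, and the spectra/$MU$-module adjunction yields $L_{K(t)} N \simeq \Hom_{MU}(M_t MU, L_t N)$. The preceding lemma identifies $M_t MU \simeq \Kos(I_t) \sm_{MU} \check{C}(v_t)$. Applying smash--hom adjunction and using that $v_t$ acts invertibly on $L_t N = N[I_{t+1}^{-1}]$ (Proposition~\ref{chromtoarith}), the inner hom $\Hom_{MU}(\check{C}(v_t), L_t N)$ collapses to $L_t N$. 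Therefore
\[
L_{K(t)} N \simeq \Hom_{MU}(\Kos(I_t), L_t N) \simeq (L_t N)^{\wedge}_{I_t}.
\]

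Second, I would show that $I_t$-completion identifies $L_t N$ with $N[v_t^{-1}]$. Since $N[v_t^{-1}]$ is $v_t$-local and hence $I_{t+1}$-local (any $I_{t+1}$-nilpotent module admits only the zero map to a $v_t$-local target, because $v_t$ acts nilpotently on the source and invertibly on the target), the universal property of $L_t = L_{I_{t+1}}$ produces a canonical comparison map $L_t N \to N[v_t^{-1}]$. Using the decomposition $\Kos(I_{t+1}) \simeq \Kos(I_t) \sm \Kos(v_t)$ and applying the octahedral axiom to the evident map of fiber sequences $\Gamma_{I_{t+1}} N \to N \to L_t N$ and $\Gamma_{v_t} N \to N \to N[v_t^{-1}]$, one identifies the fiber of this comparison with
\[
\cof(\Gamma_{I_{t+1}} N \to \Gamma_{v_t} N) \simeq \Gamma_{v_t} N \sm \check{C}(I_t) \simeq L_{I_t}(\Gamma_{v_t} N),
\]
which is $I_t$-local. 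Because $\Kos(I_t)$ is $I_t$-nilpotent, any $I_t$-local module is sent to zero by $(-)^{\wedge}_{I_t} = \Hom_{MU}(\Kos(I_t), -)$, and so the comparison map induces the equivalence $(L_t N)^{\wedge}_{I_t} \simeq (N[v_t^{-1}])^{\wedge}_{I_t}$. Combined with the first step, this proves the proposition.

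The main technical step is the fiber identification in the second part: it requires the careful use of the octahedral axiom together with the Koszul multiplicativity $\Kos(I_{t+1}) \simeq \Kos(I_t) \sm \Kos(v_t)$ and the smashing description $\check{C}(I_t) \sm - \simeq L_{I_t}$. The remaining manipulations are formal consequences of the adjunctions and the Greenlees--May formalism recalled in Section~\ref{arithmeticlocalization}.
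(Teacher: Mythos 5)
Your overall strategy is the same as the paper's: reduce to computing $I_t$-completion, and then exploit the vanishing of $\Hom_{MU}(\Kos(I_t),-)$ on $I_t$-local modules (since $\Kos(I_t)$ is $I_t$-nilpotent) to trade $L_tN$ for $N[v_t^{-1}]$. Your Step~2 is a correct repackaging of the paper's claim that the map $\phi$ is an equivalence: you compare $L_tN \to N[v_t^{-1}]$ and show the fiber is $I_t$-local, whereas the paper compares the fibers $\Gamma_{I_{t+1}}N \to \Gamma_{v_t}N$ and shows the cofiber of that map dies after completion --- the same vanishing, just run horizontally versus vertically in the $3\times 3$ diagram.

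However, there is a real error in Step~1. You assert that $v_t$ acts invertibly on $L_tN = N[I_{t+1}^{-1}]$, and use this to collapse the inner hom $\Hom_{MU}(\check{C}(v_t),L_tN)$ to $L_tN$ \emph{before} applying $\Hom_{MU}(\Kos(I_t),-)$. This is false: $L_tN = N\sm\check C(I_{t+1})$ is $I_{t+1}$-local, not $v_t$-local. (Concretely, the fracture square expresses $L_tN$ as a pullback of $L_{K(t)}N$ and $L_{t-1}N$; $v_t$ is invertible on the former but certainly not on the latter. For instance, $v_1$ is not invertible on $L_1MU_{(p)}$, since it is not invertible on the rationalization $L_0MU_{(p)}$.) The notation $N[I_{t+1}^{-1}]$ denotes localization \emph{away} from the closed locus cut out by $I_{t+1}$, not inversion of the elements of $I_{t+1}$. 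As a result, $\Hom_{MU}(\check C(v_t),L_tN)\simeq L_tN$ does not hold on the nose.

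The conclusion of Step~1 is nonetheless salvageable, and by exactly the orthogonality you invoke in Step~2. The cofiber of $\Hom_{MU}(\check C(v_t),L_tN)\to L_tN$ is $\Hom_{MU}(\Kos(v_t),L_tN)$ (up to a shift), so after applying $\Hom_{MU}(\Kos(I_t),-)$ the discrepancy becomes
\[
\Hom_{MU}\bigl(\Kos(I_t),\Hom_{MU}(\Kos(v_t),L_tN)\bigr)\simeq\Hom_{MU}(\Kos(I_{t+1}),L_tN),
\]
which vanishes because $\Kos(I_{t+1})$ is $I_{t+1}$-nilpotent while $L_tN$ is $I_{t+1}$-local. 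Alternatively, one can simply cite \myref{chromtoarith}(2) directly, as the paper does, to identify $(L_tN)_{I_t}^\wedge$ with $\Hom_{S^0}(\colim_i M_i, L_tN) \simeq L_{K(t)}N$ for type $t$ generalized Moore spectra $M_i$. Either way, Step~1 should be rewritten; as stated, the justification is incorrect even though the intermediate conclusion $L_{K(t)}N\simeq(L_tN)_{I_t}^\wedge$ is true.
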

\begin{proof}
By \myref{kchar} and \myref{chromtoarith}, we have
\[L_{K(t)}N \simeq (N[I_{t+1}^{-1}])_{I_t}^{\wedge} \simeq \Hom(\Kos(I_t),N \sm \check C(I_{t+1})).\]
Consider the following commutative diagram of fiber sequences
\[\resizebox{\columnwidth}{!}{\xymatrix{\Hom(\Kos(I_t), N \sm \Kos(I_{t+1})) \ar[r] \ar[d]_{\phi} & \Hom(\Kos(I_t), N) \ar[r] \ar[d]^{\sim} & \Hom(\Kos(I_t), N \sm \check C(I_{t+1})) \ar[d] \\      
\Hom(\Kos(I_t), N \sm \Kos(\nu_t)) \ar[r] & \Hom(\Kos(I_t), N) \ar[r] & \Hom(\Kos(I_t), N \sm \check C(\nu_t)).}}\]
We claim that the first vertical map $\phi$ is an equivalence, hence so is the last one and the result follows. To see the claim, note that $\phi$ fits into a cofiber sequence
\[\xymatrix{\Hom(\Kos(I_t), N \sm \Kos(I_{t}) \sm \Kos(\nu_t)) \ar[r] & \Hom(\Kos(I_t), N \sm \Kos(\nu_t)) \ar[d] \\
& \Hom(\Kos(I_t), N \sm \check C(I_{t}) \sm \Kos(\nu_t))}\]
where the cofiber is contractible by adjunction, as there are no non-trivial maps from an $I_{t}$-local module to an $I_{t}$-complete module. 
\end{proof}

\begin{remark}
The spectral sequence of \myref{locbl} 
\[E_{s,t}^2 = \check CH_{I_{t+1}}^{-p,-q}(\pi_*N) \Rightarrow \pi_{p+q}(L_tN)\]
which corresponds to the geometric decomposition induced by the chromatic fracture cube. It accounts for the existence of odd-dimensional classes in the homotopy of $L_tE_n$, $0< t < n$.
\end{remark}

The following table summarizes the identifications of the chromatic functors on the category of $MU$-modules. 

\renewcommand{\arraystretch}{1.4}
\begin{center}
    \begin{tabular}{| l | l |}
    \hline 
    Chromatic functor & Arithmetic functor \\ \hline
    $L_t(-)$ & $(-)[I_{t+1}^{-1}]$ \\ 
    $C_t(-)$ & $\Gamma_{I_{t+1}}(-)$ \\ 
    $M_t(-)$ & $\Gamma_{I_t}(-)[\nu_t^{-1}]$ \\ 
    $L_{K(t)}(-)$ & $(-)[\nu_t^{-1}]_{I_t}^{\wedge}$ \\ \hline
    \end{tabular}
\end{center}

In particular, we have a commutative diagram
\[\xymatrix{& \Mod_{MU} \ar[d]^{[\nu_t^{-1}]} \ar@/^-1.5pc/[ldd]_{M_t} \ar@/^1.5pc/[rdd]^{L_{K(t)}} & \\
& \Mod_{\nu_t^{-1}MU} \ar@<0.5ex>[ld]^{\Gamma_{I_t}} \ar@<0.5ex>[rd]^{(-)_{I_t}^{\wedge}} & \\
\Mod_{\nu_t^{-1}MU}^{I_t-\text{nil}} \ar@<0.5ex>[ru] \ar[rr]_{\sim} & &  \Mod_{\nu_t^{-1}MU}^{I_t-\text{comp}} \ar@<0.5ex>[lu]}\]
where the bottom horizontal equivalence is the well-known equivalence between the height $t$ monochromatic category and the $K(t)$-local category when restricted to $MU$-modules.

\subsection{$K(n)$-localization and flatness}\label{lcompletion}

\myref{lknexplicit} can be used to compute the homotopy groups of localizations. Recall that a module spectrum $M$ over an $\Es{1}$-ring spectrum $R$ is said to be flat if and only if $\pi_*M$ is flat as a graded module over $\pi_*R$.

\begin{corollary}\mylabel{flatcoefficients}
If $N \in \Mod_{E_n}$ is flat, then $\pi_*L_{K(t)}N \cong ((\pi_*N)[\nu_t^{-1}])_{I_t}^{\wedge}$.
\end{corollary}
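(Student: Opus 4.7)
The strategy is to combine the explicit identification of $L_{K(t)}$ with the convergent spectral sequence for derived completion from \myref{compbl}, and then argue that the spectral sequence collapses under the flatness hypothesis.

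First, I would invoke the $E_n$-analogue of \myref{lknexplicit} (noted in the remark following that proposition) to write
\[
L_{K(t)} N \simeq (N[\nu_t^{-1}])_{I_t}^{\wedge}.
\]
Since arithmetic inversion at an element is a smashing Bousfield localization (part (1) of \myref{locbl} applied to the principal ideal $(\nu_t)$), the homotopy groups of $N[\nu_t^{-1}]$ are computed by simply inverting $\nu_t$ algebraically: $\pi_*(N[\nu_t^{-1}]) \cong (\pi_*N)[\nu_t^{-1}]$. This is the content of the collapse of the \v Cech cohomology spectral sequence in degree zero, which is immediate because inverting a single element has no higher cohomology.

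The next step is to understand the effect of $I_t$-adic completion on homotopy. By part (2) of \myref{compbl}, there is a strongly convergent spectral sequence
\[
E_{s,t}^2 = L_s((\pi_*N)[\nu_t^{-1}])_t \Longrightarrow \pi_{s+t}\bigl((N[\nu_t^{-1}])_{I_t}^{\wedge}\bigr),
\]
where $L_s$ denotes the $s$-th left derived functor of ordinary $I_t$-adic completion of graded $E_n^*[\nu_t^{-1}]$-modules. If $\pi_*N$ is flat as a graded $E_n^*$-module, then $(\pi_*N)[\nu_t^{-1}]$ is flat as a graded $E_n^*[\nu_t^{-1}]$-module, and $I_t = (p,v_1,\ldots,v_{t-1})$ remains a finitely generated (indeed regular) ideal after inverting $\nu_t$. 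For such flat modules over a Noetherian base, the higher derived functors of $I_t$-adic completion vanish; this is the mild generalization of Hovey's theorem promised in the introduction (to be established in Section \ref{lcompletion}). Granting this vanishing, the spectral sequence collapses onto the $s=0$ line, yielding the desired isomorphism $\pi_*L_{K(t)}N \cong ((\pi_*N)[\nu_t^{-1}])_{I_t}^{\wedge}$.

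The main obstacle, and the one nontrivial input in this argument, is the vanishing $L_s((\pi_*N)[\nu_t^{-1}]) = 0$ for $s > 0$ on flat modules. Modulo that fact, everything else is a formal consequence of the identifications assembled in the previous subsections: the explicit formula for $L_{K(t)}$, exactness of localization at a single element, and the derived completion spectral sequence.
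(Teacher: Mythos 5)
Your argument is correct and follows the paper's proof exactly: identify $L_{K(t)}N \simeq (N[\nu_t^{-1}])_{I_t}^{\wedge}$ via \myref{lknexplicit}, pass flatness through the smashing localization (so $\pi_*(N[\nu_t^{-1}]) \cong (\pi_*N)[\nu_t^{-1}]$, since $\pi_*$ preserves filtered colimits), and collapse the \myref{compbl} spectral sequence. One small attribution slip: the vanishing of the higher derived functors $L_s$ for $s>0$ on flat modules (together with $L_0$ coinciding with ordinary $I$-adic completion there) is a standard fact noted in the paragraph preceding \myref{flatflat}, whereas \myref{flatflat} itself proves that the completion of a flat module is again flat---a different statement and not what is needed at this step.
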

\begin{proof}
By \myref{lknexplicit}, $L_{K(t)}N \simeq (N[\nu_t^{-1}])_{I_t}^{\wedge}$. Since $N$ is flat, so is $N[\nu_t^{-1}]$, hence the spectral sequence of \myref{compbl} computing the completion collapses. Therefore, we get
\[\pi_*(N[\nu_t^{-1}])_{I_t}^{\wedge} \cong ((\pi_*N)[\nu_t^{-1}])_{I_t}^{\wedge},\]
since $\pi_*$ preserves filtered colimits. 
\end{proof}

More generally, \myref{compbl} gives a natural, strongly convergent spectral sequence 
\[E_{s,t}^2 = (L_s\pi_*M)_t \Rightarrow \pi_{s+t}L_{K(n)}M\]
with $E_{s,*}^2 = 0$ if $s > n$ and differentials $d^r: E_{s,t}^r \to E_{s-r,t+r-1}^r$. Using this spectral sequence, it is not hard to see \cite[Cor. 3.14]{frankland} that $M \in \Mod_{E_n}$ is $K(n)$-local if and only if $\pi_*M$ is isomorphic to $L_0(\pi_*M)$. In fact this holds more generally for any completion functor over a connective ring spectrum, see \cite[Thm. 4.2.13]{dag12}. 

\begin{remark}
This corollary complements Hovey's result for ring spectra, \cite[Thm. 1.5.4.]{Hovey-vn}.
\end{remark}

For the rest of this section, let $R$ be a regular complete local Noetherian commutative ring of dimension $n$, and let $I = (x_1,\ldots,x_t)$ be an ideal in $R$ with a chosen minimal regular sequence of generators. The main example of interest to us is $E_n^0 = \W k\llbracket u_1,\ldots u_{n-1} \rrbracket$ with its maximal ideal $\fm = (p,u_1,\ldots, u_{n-1})$.

By the Artin-Rees lemma, the algebraic completion functor $(-)_{I}^{\wedge}$ is exact when restricted to finitely generated modules, but it is neither left nor right exact in general. Therefore, for general $R$-modules, we have to consider the left derived functors $L_s$ of $I$-adic completion. However, $L_0$ coincides with ordinary $I$-adic completion for flat modules, so we may restrict ourselves to this case here. An overview of the construction and properties of these functors relevant to topology can be found in \cite{hoveystrickland, frankland, rezkanalytic}. 

\begin{remark}
The assumption that $R$ is Noetherian can be weakened. In particular, the theory applies as well to the non-Noetherian but coherent ring $BP_*=\Z_{(p)}[v_1,v_2,\ldots]$, see \cite{greenleesmay92}. 
\end{remark}

The following result was proven in the special case of $R={E_n}_*$ and $I = \fm$ by Hovey \cite{hoveyetheorynotes}; the arguments easily generalize to give the following flatness criterion. 

\begin{proposition}\mylabel{flatflat}
If $M$ is a flat $R$-module such that $M/I$ is projective over $R/I$, then $M_{I}^{\wedge}$ is also flat over $R$. 
\end{proposition}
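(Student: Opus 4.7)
The plan is to invoke the local flatness criterion for $I$-adically complete modules: for Noetherian $R$ and a finitely generated ideal $I$, an $I$-adically complete $R$-module $N$ is flat over $R$ precisely when $N/IN$ is flat over $R/I$ and $\Tor_1^R(R/I, N) = 0$. Applied to $N = M_I^{\wedge}$, this reduces the theorem to two separate verifications.

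For the mod-$I$ flatness, I would first show $M_I^{\wedge} / I M_I^{\wedge} \cong M/IM$. For flat $M$ over Noetherian $R$ with finitely generated $I$, the natural map $M \to M_I^{\wedge}$ induces an isomorphism after reduction modulo every power of $I$; the case $s=1$ gives the claim, and the hypothesis that $M/IM$ is projective over $R/I$ then provides the required flatness.

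For the Tor vanishing, I would compute $\Tor_i^R(R/I, M_I^{\wedge})$ using the Koszul resolution $K_{\bullet} := K_{\bullet}(x_1,\ldots,x_t)$, which is a finite free resolution of $R/I$ because $(x_1,\ldots,x_t)$ is a regular sequence in the regular ring $R$. Since each $K_j$ is finitely generated free, the tensor product $K_{\bullet} \otimes_R M_I^{\wedge}$ coincides with the Koszul complex $K_{\bullet}(x_1,\ldots,x_t; M_I^{\wedge})$ on the completion, so the vanishing of $\Tor_i^R(R/I, M_I^{\wedge})$ for $i \geq 1$ is equivalent to $(x_1,\ldots,x_t)$ being $M_I^{\wedge}$-regular. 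Flatness of $M$ ensures this sequence is $M$-regular and that the Koszul complex $K_{\bullet} \otimes_R M$ resolves $M/IM$; I would transfer this exactness to $M_I^{\wedge}$ by induction on $t$, using the flat short exact sequences $0 \to M' \xrightarrow{x_i} M' \to M'/x_i M' \to 0$ for the iterated quotients $M'$ and noting that $I$-adic completion preserves such sequences because the relevant $\varprojlim^1$ terms vanish by Artin--Rees.

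The main obstacle is this regularity transfer, where one needs careful bookkeeping to confirm that the relevant inverse systems are Mittag--Leffler. Equivalently, phrased in the derived language of Section \ref{arithmeticlocalization}, one must show that the higher left derived completion functors $L_s$ vanish on flat modules with projective reduction modulo $I$; this is the algebraic counterpart of the spectral sequence collapse appearing in \myref{flatcoefficients}.
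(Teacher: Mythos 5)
Your strategy diverges from the paper's: the paper invokes \cite[Tag 05D3]{stacks-project}, which directly yields the stronger conclusion that $M_I^\wedge$ is projective over $R_I^\wedge$ (a retract of a pro-free module), and then uses \cite[Prop.~A.13]{hoveystrickland} plus the Noetherian hypothesis to deduce flatness over $R$. Your route instead tries to deduce flatness from the local flatness criterion, and this is where the argument breaks. The criterion you state --- that an $I$-adically complete $N$ over Noetherian $R$ is flat iff $N/IN$ is flat over $R/I$ and $\Tor_1^R(R/I,N)=0$ --- is not the standard local criterion. The Bourbaki/Matsumura form requires $N$ to be \emph{idealwise $I$-separated} (or, in practice, finitely generated over a suitable Noetherian algebra), and mere $I$-adic completeness of $N$ does not imply this. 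Completeness of $N$ together with the Tor-vanishing does control $\Tor_*^R(T,N)$ for $I$-power-torsion modules $T$ by d\'evissage, but it gives no handle on $\Tor_1^R(R/\mathfrak p, N)$ for primes $\mathfrak p$ not containing $I$, which is exactly what flatness over $R$ requires; there are complete ``$I$-adically flat'' modules over Noetherian rings that are not flat.

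A telling symptom is that your argument never really uses the projectivity of $M/IM$ over $R/I$ --- you only use that it is flat, which is automatic from flatness of $M$. If your criterion-based proof were correct, the proposition would hold with ``projective'' weakened to nothing at all, but the projectivity is precisely what the paper's citation of Tag 05D3 consumes (to lift a basis along the completion and exhibit $M_I^\wedge$ as a retract of a pro-free). Separately, in the regularity-transfer step your appeal to Artin--Rees for $\varprojlim^1$-vanishing is applied to inverse systems built from the non-finitely-generated $M$, where Artin--Rees as usually stated does not apply; this piece can likely be repaired by invoking Artin--Rees in $R$ together with the flat-module identity $J_1M\cap J_2M=(J_1\cap J_2)M$, but the first gap is structural and sinks the approach as written.
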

\begin{proof}[Sketch of proof]
By \cite[Tag 05D3]{stacks-project}, the hypotheses imply that $M_{I}^{\wedge}$ is a retract of a pro-free module, i.e., a module of the form $F_{I}^{\wedge}$ with $F \in \Mod_R$ free. Since pro-free objects are retracts of products of $R$ by \cite[Prop. A.13]{hoveystrickland}\footnote{Note that the proof of Prop.~A.13 in \cite{hoveystrickland} generalizes to any finitely generated ideal $I$.}, hence flat as $R$ is Noetherian, it follows that $M_{I}^{\wedge}$ is also flat over $R$.
\end{proof}

\begin{remark}
In fact, in case $R$ is local and $I =\fm$ is the maximal ideal, the class of pro-free objects coincides with the collection of flat $R$-modules which are $I$-complete. This characterization does not generalize to arbitrary finitely generated ideals $I$, as the example $I=(0)$ shows. 
\end{remark}

\section{A short digression on Landweber exact theories}

We include a short digression on Landweber exact cohomology theories. This is partly to set up some technicalities that will be of use later and partly to clarify the relation between Landweber exactness and Brown representability. 

Assume that $E$ is a Landweber exact spectrum and $R$ is a flat $E$-module. Landweber exactness implies that
\[
R_*(X) \cong R_{*} \otimes_{E_*} E_*(X)
\]
defines a homology theory for all spaces $X$, since homology commutes with filtered colimits. However, we prefer to work cohomologically so that our theories are naturally ring-valued; here, things are a bit more complicated cohomologically. Base change provides a cohomology theory defined on finite spaces (spaces equivalent to finite CW-complexes) $R^* \otimes_{E^*} E^*(X)$ and on these spaces this is the same as $R^*(X)$. 

We may extend this to finite $G$-CW complexes Borel equivariantly: $R^* \otimes_{E^*} E^*(EG\times_G X)$, but there can be a large difference between $R^*(Y)$ and $R^* \otimes_{E^*} E^*(Y)$ for infinite $Y$.

\begin{example}
Let $E  = \E$, $R = p^{-1}\E$, and $Y = B\Z/p$. Then $\E^*(B\Z/p)$ is a free $\E^*$-module of rank $p^n$. Thus we see that $p^{-1}\E^* \otimes_{\E^*} \E^*(B\Z/p)$ is a free module of rank $p^n$ over $p^{-1}\E^*$. However, $(p^{-1}\E)^*(B\Z/p) \cong p^{-1}\E^*$ as $p^{-1}\E$ is a rational cohomology theory. 
\end{example}

The key observation is that, in general, $R^* \otimes_{E^*} E^*(-)$ does not satisfy the infinite wedge axiom. However, Brown representability (in the form of \cite{AdamsRepresentability}) applied to this theory defined on finite spaces produces a spectrum $R'$. 

\begin{lemma}\mylabel{landweberequal}
With the above notation, $R \simeq R'$.
\end{lemma}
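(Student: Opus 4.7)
The plan is to show that $R$ and $R'$ represent the same cohomology theory when restricted to finite CW spectra, and then invoke the uniqueness clause of Brown--Adams representability. Since $R$ is an $E$-module, the action map $R \sm_E E \to R$ furnishes a natural comparison
\[
\theta_X \colon R^* \otimes_{E^*} E^*(X) \longrightarrow R^*(X)
\]
for every space $X$, and by construction the left-hand side is precisely the theory $R'$ was built to represent.

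The central step is to verify that $\theta_X$ is an isomorphism whenever $X$ is a finite CW complex. For this, I would first observe that $X \mapsto R^* \otimes_{E^*} E^*(X)$ is itself a cohomology theory on finite spaces: flatness of $R^*$ over $E^*$ preserves the long exact sequences of pairs coming from $E^*(-)$, and the finite wedge axiom is immediate. Since $\theta_{S^0}$ is the identity on $R^*$, an induction on the number of cells, using the five-lemma applied to the cofibration sequences of the skeletal filtration, shows that $\theta_X$ is an isomorphism for all finite $X$.

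Once this is in place, both $R$ and $R'$ represent the same cohomology theory on finite CW spectra. Adams's representability theorem then produces a morphism $R \to R'$ realizing the identity natural transformation (well-defined up to phantoms, which is harmless here), and evaluating on spheres shows it induces an isomorphism on homotopy groups, hence is a weak equivalence. The only subtlety worth flagging is that $\theta_X$ genuinely fails on infinite complexes, as the paper's example with $R = p^{-1}\E$ and $X = B\Z/p$ makes explicit; the restriction to finite CW spectra is therefore essential before invoking representability, but this is exactly the hypothesis under which Adams's theorem applies, so it poses no real obstacle.
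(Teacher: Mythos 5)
Your proof is correct, but it takes a different route from the paper's. The paper disposes of the lemma in one line by citing Hovey--Strickland's Theorem~2.8, which classifies Landweber exact spectra by their coefficient rings (as $MU_*$-algebras): since $R$ and $R'$ are both Landweber exact with the same coefficients, they must be equivalent. Your argument instead builds the comparison natural transformation $\theta_X\colon R^*\otimes_{E^*}E^*(X)\to R^*(X)$ from the module structure, checks by flatness plus a cell induction that it is an isomorphism on finite spectra, and then uses the morphism-realization part of Adams representability to produce a genuine map of spectra which, being a $\pi_*$-isomorphism by evaluation on spheres, is an equivalence. Both are valid. The Hovey--Strickland route is shorter and conceptually cleaner here because the hard work has been outsourced to their classification theorem; your route is more hands-on and closer to first principles, essentially re-deriving the special case of that theorem needed for this lemma. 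One small caution on phrasing: Brown--Adams does not come with a ``uniqueness clause'' for representing objects in the naive sense (phantom maps obstruct it in general); what you actually use, and use correctly, is the statement that a natural transformation of cohomology theories on finite spectra is realized by some map of spectra, together with the observation that any realization of a natural isomorphism is automatically a $\pi_*$-isomorphism. Stating it that way avoids the misleading impression that the representing spectrum is canonically unique.
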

\begin{proof}
The cohomology theory associated to $R'$ must take the same value as the cohomology theory associated to $R$ on finite spaces. Now this lemma is an immediate consequence of \cite[Thm. 2.8]{hoveystrickland}, which says that Landweber exact spectra are determined by their coefficients. 
\end{proof}

In the following, we will also need the fact that the smash product of even Landweber exact theories is again even. Note that this fails for general spectra, as the example $H\F_p \sm H\F_p$ shows. 

\begin{lemma}\mylabel{landwebereven}
Suppose that $E$ and $F$ are Landweber exact theories, then so is $E \sm F$. Additionally, if $E$ and $F$ are even, then $E \sm F$ is even as well. 
\end{lemma}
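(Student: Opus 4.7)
The plan is to prove Landweber exactness of $E \sm F$ by showing directly that the functor $X \mapsto (E \sm F)_*X$ factors as $\pi_*(E \sm F) \otimes_{MU_*} MU_*(-)$, and then to read off the coefficient formula $\pi_*(E \sm F) \cong E_* \otimes_{MU_*} MU_*MU \otimes_{MU_*} F_*$ as a special case, from which evenness becomes manifest.

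First I would apply Landweber exactness of $E$, then the smash symmetry $MU \sm F \sm X \simeq F \sm MU \sm X$, and then Landweber exactness of $F$ to an arbitrary spectrum $X$, obtaining the chain
\[
(E \sm F)_*X = E_*(F \sm X) \cong E_* \otimes_{MU_*} MU_*(F \sm X) = E_* \otimes_{MU_*} F_*(MU \sm X) \cong E_* \otimes_{MU_*} F_* \otimes_{MU_*} MU_*(MU \sm X).
\]
Since $MU_*MU \cong MU_*[b_1,b_2,\ldots]$ is free over $MU_*$, the $MU$-module $MU \sm MU$ splits as a wedge of suspensions of $MU$, giving the Künneth isomorphism $MU_*(MU \sm X) \cong MU_*MU \otimes_{MU_*} MU_*X$. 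Substituting yields
\[
(E \sm F)_*X \cong \bigl(E_* \otimes_{MU_*} MU_*MU \otimes_{MU_*} F_*\bigr) \otimes_{MU_*} MU_*X.
\]
Setting $X = S^0$ identifies the bracketed term with $\pi_*(E \sm F)$, and therefore the natural transformation $\pi_*(E \sm F) \otimes_{MU_*} MU_*(-) \to (E \sm F)_*(-)$ induced by the composite orientation $MU \to E \to E \sm F$ is an isomorphism. This is exactly the defining property of Landweber exactness for $E \sm F$.

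For the evenness statement, I would observe that if $E_*$ and $F_*$ are concentrated in even degrees, then every factor in the coefficient formula $E_* \otimes_{MU_*} MU_*MU \otimes_{MU_*} F_*$ is concentrated in even degrees: $MU_*$ is even and the polynomial generators $b_i$ of $MU_*MU$ over $MU_*$ are of even degree. Hence $\pi_*(E \sm F)$ is even.

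The main obstacle will be to verify that the displayed chain of natural isomorphisms actually coincides with the canonical Landweber map induced by the chosen orientation on $E \sm F$. This requires careful tracking of the left versus right $MU_*$-module structures on the bimodule $MU_*MU$ when collapsing the nested tensors; in particular, the $MU_*$-action implicit in each application of Landweber exactness must match up with the one induced by the orientation. Once this bookkeeping is settled, the argument is purely formal, relying only on Landweber exactness of $E$ and $F$ and the free-module structure of $MU_*MU$ over $MU_*$.
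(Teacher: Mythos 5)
Your argument is correct, and for the Landweber exactness part it takes a genuinely different route from the paper. The paper appeals to the stacky characterization: it exhibits $\Spec(\pi_*(E\sm F))$ as the pullback $\Spec(E_*)\times_{\mathcal{M}_{\mathrm{FG}}}\Spec(F_*)$ and deduces flatness of $\Spec(\pi_*(E\sm F))\to\mathcal{M}_{\mathrm{FG}}$ by base change, invoking the equivalence between Landweber exactness and flatness over $\mathcal{M}_{\mathrm{FG}}$. You instead produce a natural isomorphism $(E\sm F)_*(-)\cong\pi_*(E\sm F)\otimes_{MU_*}MU_*(-)$ by iterating Landweber exactness of $E$ and $F$ together with the K\"unneth isomorphism for $MU_*(MU\sm X)$. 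This is more elementary: it avoids the stacks language entirely and effectively re-proves, by hand, the ``composite of flat maps is flat'' fact that the paper obtains from the pullback square. Your approach also delivers the coefficient formula $\pi_*(E\sm F)\cong E_*\otimes_{MU_*}MU_*MU\otimes_{MU_*}F_*$ as a byproduct, whereas the paper simply asserts it; both proofs then read off evenness from this formula in the same way.

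One remark on your stated ``main obstacle.'' You do not actually need to verify that your chain of isomorphisms coincides with the canonical Landweber transformation. What you need is weaker: once you know that $\pi_*(E\sm F)\otimes_{MU_*}MU_*(-)$ is \emph{some} homology theory (which follows from being naturally isomorphic to $(E\sm F)_*(-)$), the $MU_*$-module $\pi_*(E\sm F)$ satisfies the Landweber exactness criterion. The canonical Landweber map $\pi_*(E\sm F)\otimes_{MU_*}MU_*(-)\to(E\sm F)_*(-)$, induced by the orientation, is then a natural transformation of homology theories which is an isomorphism on $S^0$; by the five lemma and passage to colimits it is an isomorphism on all spectra. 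So the bookkeeping of the two $MU_*$-module structures on $MU_*MU$ only matters for getting the coefficient formula in the symmetric form you quote, not for the Landweber exactness claim itself. The only thing you must genuinely check is that each step of your chain is natural in $X$ and that the final $\otimes_{MU_*}MU_*X$ uses a consistent $MU_*$-module structure on $\pi_*(E\sm F)$, which it does.
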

\begin{proof}
The first part of the claim follows from Hopkins discussion in \cite{coctalos}. Indeed, there is a pullback diagram
\[\xymatrix{\Spec(\pi_*E \sm F) \ar[r] \ar[d] & \Spec(F_*) \ar[d]^v \\
\Spec(E_*) \ar[r]_u & \mathcal{M_{\mathrm{FG}}}          }\]
where $\mathcal{M}_{fg}$ denotes the stack of one-dimensional formal groups. Since $u$ and $v$ are flat, the composite $\Spec(\pi_*E \sm F) \to \mathcal{M}_{fg}$ is flat by base-change. 
To show that $E \sm F$ is even, recall that
\[\pi_*(E \sm F) \cong E_* \otimes_{MU_*} MU_*MU \otimes_{MU_*} F_*\]
so the claim follows since $MU_*MU$ is concentrated in even degrees. 
\end{proof}

\section{Some spectra related to character theory}
\subsection{Recollections}
We recall the character maps of \cite{tgcm}. For the rest of the paper fix a prime $p$. Let $\E$ be Morava $E$-theory and $\LE = L_{K(t)}\E$ be the localization of $\E$ by Morava $K(t)$. By \myref{flatcoefficients} there is an isomorphism
\[
\pi_0\LE \cong \W k\llbracket u_1,\ldots,u_{n-1}   \rrbracket[u_t^{-1}]_{I_t}^{\wedge}.
\]
Let $\G_{\E}$ be the \pdiv group associated to $\E$ and $\G_{\LE}$ the \pdiv group associated to $\LE$. In \cite{tgcm} a flat extension $C_{t}^*$ of $\LE^*$ is constructed with the following property:

Let $\G := \LE^0 \otimes \G_{\E}$ so that $\G_{\LE}$ is the connected component of the identity of $\G$ (Proposition 2.4 of \cite{tgcm}). Recall the following proposition.
\begin{proposition}\mylabel{num2} (Proposition 2.17 in \cite{tgcm})
The functor from $\LE^0$-algebras to sets given by
\[
\Iso_{\G_{\LE} / }(\G_{\LE}\oplus\QZ^{n-t},\G) \colon R \mapsto \Iso_{\G_{\LE} / }(R \otimes \G_{\LE}\oplus\QZ^{n-t},R \otimes \G)
\]
is representable by $C_{t}^0$.
\end{proposition}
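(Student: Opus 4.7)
The plan is to construct $C_t^0$ explicitly and verify the stated universal property by reducing it to the representability of two simpler functors. The structural input is Proposition~2.4 of \cite{tgcm}, which supplies a canonical connected--\'etale short exact sequence of $p$-divisible groups over $\LE^0$,
\[
0 \longrightarrow \G_{\LE} \longrightarrow \G \longrightarrow \G^{\mathrm{et}} \longrightarrow 0,
\]
with $\G^{\mathrm{et}}$ \'etale of height $n-t$. Since ``over $\G_{\LE}$'' requires that the map restrict to the identity on $\G_{\LE}$, specifying an element of $\Iso_{\G_{\LE}/}(\G_{\LE} \oplus \QZ^{n-t},\G)$ amounts to specifying a homomorphism $\alpha\colon \QZ^{n-t} \to \G$ whose composite $\bar{\alpha}\colon \QZ^{n-t} \to \G^{\mathrm{et}}$ with the projection is an isomorphism of \'etale $p$-divisible groups. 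Thus the functor decomposes as the datum of a trivialization of the \'etale quotient together with a lift of that trivialization to $\G$.

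I would represent these two pieces in turn. First, the functor of trivializations $\QZ^{n-t} \to \G^{\mathrm{et}}$ is representable because for each $k$ the scheme $\Iso((\Z/p^k)^{n-t},\G^{\mathrm{et}}[p^k])$ is a finite \'etale cover of $\Spec \LE^0$; the filtered colimit $A$ of the corresponding coordinate rings represents trivializations of $\G^{\mathrm{et}}$ itself. Second, over $\Spec A$ the set of lifts of the universal trivialization $\bar{\alpha}$ to $\G$ forms a torsor under $\Hom(\QZ^{n-t},\G_{\LE}) \cong T_p(\G_{\LE})^{n-t}$, the $(n-t)$-fold product of the Tate module of the formal group $\G_{\LE}$. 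Adjoining coordinates for generators of this Tate module to $A$ and $I_t$-adically completing produces a ring that carries a universal lift, and hence a universal $\G_{\LE}$-over isomorphism $\G_{\LE}\oplus \QZ^{n-t} \xrightarrow{\sim} \G$. The remaining task is to check that this ring agrees with the $C_t^0$ of \cite{tgcm}, which is built as a localization and completion of a colimit of level-structure rings on $\G$; this is a direct comparison of universal properties.

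The main technical obstacle is the second representability step: one must represent the torsor of lifts by an honest (formal) affine scheme, and control its flatness over $\LE^0$. This requires treating the Tate module of $\G_{\LE}$ as a continuous group functor on $\LE^0$-algebras and applying a Strickland-type theory of level structures on $\G_{\LE}$, adapted to the $K(t)$-local setting in which $\LE^0$ is a completed localization of a complete local Noetherian ring rather than itself complete local. Once this representability and flatness are in place, the universal property required by the proposition follows from the construction.
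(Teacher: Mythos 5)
Your conceptual decomposition is correct: an isomorphism $\G_{\LE} \oplus \QZ^{n-t} \to \G$ under $\G_{\LE}$ is the same as a homomorphism $\QZ^{n-t} \to \G$ whose composite to $\G^{\mathrm{et}}$ is an isomorphism. But this is where the similarity to the proof in \cite{tgcm} ends, and the route you take from there has a real gap.

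The argument in \cite{tgcm} never separates the \'etale trivialization from the lift. With $\Lambda_k = (\Z/p^k)^{n-t}$, it uses the HKR fact that $\E^0(B\Lambda_k)$ corepresents $\Hom(\Lambda_k,\G_{\E})$, so that after base change $\LE^0 \otimes_{\E^0} \E^0(B\Lambda_k)$ corepresents $\Hom(\Lambda_k,\G)$; the colimit over $k$ then corepresents $\Hom(\QZ^{n-t},\G)$, and the localization $S^{-1}(-)$ inverts precisely the elements that carve out the open locus where the induced map $\QZ^{n-t}\to\G^{\mathrm{et}}$ is an isomorphism. All of the interaction between the formal and \'etale parts is thus absorbed into one localization of a colimit of finite free $\LE^0$-modules; no separate representability statement about the torsor of lifts is ever needed.

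Your second step is the problem. The set of lifts of $\bar\alpha$ is a pseudo-torsor under $\Hom(\QZ^{n-t},\G_{\LE}) \cong T_p(\G_{\LE})^{n-t}$, but $T_p(\G_{\LE})$ is not a free or pro-free module, so ``adjoining coordinates for generators'' is not a meaningful operation. As a functor, $T_p(\G_{\LE}) = \lim_{[p]}\G_{\LE}[p^k]$, pro-represented by $\Colim{k}\,\LE^0\llbracket x\rrbracket/[p^k](x)$ along the maps $x\mapsto[p](x)$; proving that the pseudo-torsor is an actual fppf torsor, that it is represented by an affine (or affine formal) scheme, and that the resulting ring is flat over $\LE^0$ is essentially the entire content of the proposition, not a technicality. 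Pointing to ``Strickland-type level structures'' does not help here: Strickland's theory parametrizes maps $A\to\G_{\LE}$ out of finite groups (a colimit construction), whereas the Tate module is the opposite limit. Finally, the closing step---``a direct comparison of universal properties'' between your ring and $C_t^0$---is where circularity threatens: if ``universal property'' means that both corepresent the stated functor, you are assuming the conclusion; a non-circular comparison would have to match the explicit ring constructions, which is precisely what the colimit-and-localize argument of \cite{tgcm} accomplishes in one pass.
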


The ring $C_{t}^*$ is constructed as a localization of the ring $\Colim{k} \text{ } \LE^* \otimes_{\E^*} \E^*(B(\Z/p^k)^{n-t})$. The following result should be compared to Proposition 6.5 of \cite{hkr}.

\begin{proposition} \mylabel{p:ff}
The ring $C_{t}^{0}$ is a faithfully flat $\LE^0$-algebra.
\end{proposition}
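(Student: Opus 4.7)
I would split the proof into a straightforward flatness argument and a more subtle faithfulness argument that uses the moduli interpretation of \myref{num2}. For flatness, note that $C_t^0$ is by construction a localization of
\[
D := \Colim{k}\, \LE^0 \otimes_{\E^0} \E^0(B(\Z/p^k)^{n-t}).
\]
Each $(\Z/p^k)^{n-t}$ is a finite abelian group and hence good at $p$, so $\E^0(B(\Z/p^k)^{n-t})$ is a free $\E^0$-module of finite rank. Base-changing to $\LE^0$ yields a free $\LE^0$-module, so $D$ is a filtered colimit of flat $\LE^0$-modules, and localization preserves flatness. This gives flatness of $C_t^0$ over $\LE^0$ with essentially no effort.

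For faithful flatness, since $\LE^0$ is Noetherian and $C_t^0$ is already flat, it suffices to show that the induced map on spectra is surjective; by going-down for flat maps it is enough to check this at closed points. So the goal becomes: for each maximal ideal $\m \subset \LE^0$, exhibit a ring map $C_t^0 \to K$ to some field $K$ extending $\LE^0 \to \LE^0/\m \hookrightarrow K$. Because $\LE^0 = (\E^0[u_t^{-1}])^{\wedge}_{I_t}$ is an $I_t$-adic completion with $I_t = (p, u_1, \ldots, u_{t-1})$, every maximal ideal of $\LE^0$ contains $I_t$, and so $\kappa(\m)$ has characteristic $p$; I take $K$ to be its algebraic closure.

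By \myref{num2}, such a lift is precisely an isomorphism
\[
K \otimes_{\LE^0} \G_{\LE} \oplus \QZ^{n-t} \lra{\cong} K \otimes_{\LE^0} \G
\]
of $p$-divisible groups over $K$ restricting to the identity on the $K \otimes \G_{\LE}$ summand. Now $K \otimes \G$ has height $n$, its connected component is $K \otimes \G_{\LE}$ of height $t$, and its \'etale quotient has height $n-t$. Over an algebraically closed field of characteristic $p$, the connected-\'etale sequence of a $p$-divisible group splits canonically, and any \'etale $p$-divisible group of height $h$ is isomorphic to $\QZ^h$; together these two facts produce the required isomorphism, and hence the desired map $C_t^0 \to K$.

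The main obstacle I anticipate is verifying that the splitting obtained from the connected-\'etale decomposition can be arranged to be the identity on the connected summand, so that it actually represents a point of the functor in \myref{num2} and not merely of a forgetful variant; this should follow from the functoriality and uniqueness of the connected-\'etale decomposition, but the details deserve care. A secondary nuisance is that $\LE^0$ is not local, so the criterion must be checked at every maximal ideal rather than at a single point.
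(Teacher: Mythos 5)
Your proposal is correct and follows the same overall strategy as the paper: flatness from the filtered-colimit-of-free-modules description, and faithfulness by producing a $K$-point of the representable functor of \myref{num2} for each prime of $\LE^0$. The one genuine difference is in how surjectivity on $\Spec$ is arranged. The paper works at an arbitrary prime $P$, lets $i \leq t$ be the first index with $u_i \notin P$, and invokes Demazure's decomposition to write $K\otimes\G$ as $\G_{\text{for}} \oplus \QZ^{n-i}$, then regroups as $(\G_{\text{for}}\oplus\QZ^{t-i})\oplus\QZ^{n-t}$; this requires recognizing $K\otimes\G_{\LE}$ (itself not connected when $i<t$) inside the decomposition, which the paper leaves a bit terse. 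Your reduction to closed points via going-down, combined with the observation that $I_t$ lies in the Jacobson radical of the $I_t$-adically complete Noetherian ring $\LE^0$, forces $p,u_1,\ldots,u_{t-1}\in\m$ and $u_t\notin\m$, hence residue characteristic $p$ and $i=t$. This makes $K\otimes\G_{\LE}$ exactly the connected component of $K\otimes\G$, so the connected--\'etale splitting over the algebraically closed field hands you the required isomorphism under $\G_{\LE}$ with no regrouping, and your anticipated worry about the splitting being the identity on the connected summand evaporates (the first factor is literally the inclusion of the connected component). The tradeoff is that you import going-down for flat maps and the Jacobson-radical fact, while the paper's argument is self-contained at the cost of a messier case analysis at non-closed points.
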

\begin{proof}
Note that $C_{t}^0$ is a flat $\LE^0$-algebra since it is constructed as a colimit of algebras each of which is a localization of a finitely generated free module. 

We now show that it is surjective on $\Spec(-)$. Let $P \subset \LE^0$ be a prime ideal. Let $i$ be the smallest natural number such that $u_i \notin P$. Note that $i \leq t$. Now consider 
\[
K := \overline{(\LE^0/P)}_{(0)}
\]
the algebraic closure of the fraction field. There is an isomorphism (pg. 34, \cite{Dem})
\[
K \otimes \G \cong \G_{\text{for}} \oplus \QZ^{n-i}.
\]
The formal part has height $i$ because $u_i$ has been inverted. Now since
\[
\G_{\text{for}} \oplus \QZ^{n-i} \cong \G_{\text{for}} \oplus \QZ^{t-i} \oplus \QZ^{n-t},
\]
\myref{num2} implies that this is classified by a map $C_{t}^0 \lra{q} K$ that extends the canonical map $\LE^0 \lra{} K$. Now the kernel of $q$ must restrict to $P$.
\end{proof}

The ring $C_{t}^*$ is used in the construction of the transchromatic generalized character maps of \cite{tgcm}. For a finite $G$-CW complex $X$, let 
\[
\Fix_h(X) = \Coprod{\al \in \hom(\Z_{p}^{h},G)}X^{\im \al}.
\]
This is a finite $G$-CW complex with $G$-action given by $x \in X^{\im \al} \mapsto gx \in X^{\im g\al g^{-1}}$. The character map is a map $\E^*(EG \times_G X) \lra{} C_{t}^* \otimes_{\LE^*} \LE^*(EG \times_G \Fix_{n-t}X)$ with the following property:
\begin{theorem} \mylabel{charactermap} \cite{tgcm}
The character map has the property that the map induced by tensoring the domain up to $C_t$
\[
C_{t}^* \otimes_{\E^*}\E^*(EG \times_G X) \lra{\cong} C_{t}^* \otimes_{\LE^*}\LE^*(EG \times_G \Fix_{n-t}X)
\]
is an isomorphism.
\end{theorem}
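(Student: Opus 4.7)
The plan is to follow the transchromatic HKR strategy. First I would check that both sides, viewed as functors in the finite $G$-CW complex $X$, define Borel equivariant cohomology theories valued in $C_t^*$-modules. For the source this follows from the flatness of $C_t^*$ over $\E^*$, which is a consequence of \myref{p:ff} together with flatness of $\LE^*$ over $\E^*$. For the target it follows from the fact that $\Fix_{n-t}$ is built pointwise out of coproducts and subgroup fixed points, both of which send $G$-cofiber sequences to $G$-cofiber sequences. Since the character map is natural in $X$, a cellular induction reduces the claim to checking the isomorphism on orbits $X = G/H$ for subgroups $H \leq G$.

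For such an $X$, one has $EG \times_G (G/H) \simeq BH$, and a direct orbit analysis of $\Fix_{n-t}(G/H)$ yields
\[
EG \times_G \Fix_{n-t}(G/H) \simeq \Coprod{[\al]} BC_H(\im \al),
\]
where $[\al]$ ranges over $H$-conjugacy classes of continuous homomorphisms $\al\colon \Z_p^{n-t} \to H$. In particular, the statement for the pair $(G, G/H)$ is equivalent to the statement for $(H, \mathrm{pt})$, so it suffices to prove
\[
C_t^* \otimes_{\E^*} \E^*(BG) \lra{\cong} C_t^* \otimes_{\LE^*} \LE^*\!\left(\Coprod{[\al]} BC_G(\im \al)\right)
\]
for every finite group $G$. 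A Mackey-style decomposition using transfers along abelian $p$-subgroups, together with the fact that only images of $\Z_p^{n-t}$ (automatically abelian $p$-groups) contribute to the right-hand side, reduces the problem further to the case $G = A$ a finite abelian $p$-group. A K\"unneth argument then reduces to the cyclic case $A = \Z/p^k$.

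For $A = \Z/p^k$, both sides admit explicit descriptions in terms of the $p$-divisible group $\G$ over $C_t^0$. The left-hand side is the functor of $C_t^0$-algebras that parametrizes homomorphisms from the Pontryagin dual $A^\ast$ into $\G$, while the right-hand side parametrizes homomorphisms into the preferred decomposition $\G_{\LE} \oplus \QZ^{n-t}$ specified by a universal point of $\Iso_{\G_{\LE}/}(\G_{\LE} \oplus \QZ^{n-t}, \G)$. The defining universal property of $C_t^0$ in \myref{num2} identifies these two functors, giving the base case.

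The main obstacle is the compatibility check in the base case: one must verify that the character map of \cite{tgcm}, whose construction proceeds through the colimit defining $C_t^*$ and the universal splitting, agrees on $B\Z/p^k$ with the algebraic comparison map induced by the splitting of $\G$ over $C_t^0$. This is the technical heart of the argument; once it is in place, the formal reductions above (cohomology theory in $X$, Mackey to abelian $p$-groups, K\"unneth to cyclic groups) propagate the isomorphism to all pairs $(G, X)$.
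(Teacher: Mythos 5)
Note first that the paper does not prove this statement; it is recalled verbatim from \cite{tgcm}, and the comparison should therefore be against the proof given there, which itself is modeled closely on the HKR argument in \cite{hkr}.

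Your outline gets the overall shape right (cohomology theories in $X$, reduce to orbits and hence to $(G,\ast)$, reduce to abelian $p$-groups, K\"unneth to cyclic, explicit base case via the universal property of $C_t^0$), and the orbit analysis $EG\times_G\Fix_{n-t}(G/H)\simeq\coprod_{[\al]}BC_H(\im\al)$ as well as the K\"unneth reduction to cyclic groups are both correct. However, the pivotal step -- reducing from a general finite $G$ to an abelian $p$-group -- is where the argument breaks down. A ``Mackey-style decomposition using transfers along abelian $p$-subgroups'' would require the index $[G:A]$ to be invertible in the relevant coefficient ring, which is false: $\E^*$ is $p$-local and $C_t^*$ does not invert $p$ for $t\ge1$ (the multiplicative set $S$ in the construction of $C_t^*$ deliberately avoids $p$, since $p$-completeness is retained). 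Even in the $t=0$ (HKR) case, where $L=C_0^*$ is rational so $p$ \emph{is} inverted, transfers at best reduce to a Sylow $p$-subgroup, not to abelian subgroups. The argument actually used in \cite{hkr} and inherited by \cite{tgcm} is \emph{complex-oriented descent}: choose a faithful unitary representation $G\hookrightarrow U(V)$, let $F$ be the flag variety $U(V)/T$, and use the fact that for a Borel-equivariant complex-oriented theory $E$ the map $E^*(EG\times_GX)\to E^*(EG\times_G(X\times F))$ is a split monomorphism whose image is described by a cosimplicial (Amitsur-type) resolution. Since $G$ acts on $F$ with abelian stabilizers, iterating this resolution replaces $G$ by finite abelian groups. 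That tool has no transfer-theoretic substitute here.

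Finally, you flag the cyclic base case as ``the technical heart'' and leave it as a compatibility check; that is fair as a sketch, but note that in \cite{tgcm} this is exactly where most of the work happens -- one must identify the character map on $B\Z/p^k$ with the map on $p$-divisible groups $\G_{\E}[p^k]\to\G_{\LE}[p^k]\oplus(\QZ)^{n-t}[p^k]$ induced by the universal splitting over $C_t^0$ (\myref{num2}), and verify this is an isomorphism by a direct Weierstrass-type argument with the formal group parameter. Without that and with the faulty abelian reduction, the proposal as written does not establish the theorem.
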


\subsection{Some spectra related to character theory}
Let $C_t$ be the spectrum \[S^{-1}\Colim{k} \text{ } \LE \wedge_{\E} \E^{B\Lk}.\] It is clear that the coefficients of $C_t$ is the ring $C_{t}^*$ from the previous section. 

\begin{proposition}
The spectrum $C_t$ is $E(t)$-local.
\end{proposition}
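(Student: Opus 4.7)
The plan is to exhibit $C_t$ as being built from the $K(t)$-local spectrum $\LE$ by a succession of operations that each preserve $E(t)$-locality. First, observe that $\LE = L_{K(t)}\E$ is itself $E(t)$-local. Using the identification $L_t = L_{\bigvee_{i=0}^t K(i)}$ recalled earlier, any $E(t)$-acyclic spectrum is simultaneously $K(i)$-acyclic for every $0 \le i \le t$; in particular it is $K(t)$-acyclic, so it admits no nontrivial maps into a $K(t)$-local spectrum. Hence $\LE$ is $E(t)$-local.

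Second, for each $k$ the $\E$-module spectrum $\E^{B\Lk}$ is equivalent to a finite coproduct of copies of $\E$. This follows from the fact that $\Lk = (\Z/p^k)^{n-t}$ is a finite abelian $p$-group, so that $\E^*(B\Lk)$ is free of finite rank over $\E^*$ and concentrated in even degrees: lifting a basis to homotopy classes gives a map $\bigoplus_i \E \to \E^{B\Lk}$ of $\E$-modules which is an isomorphism on $\pi_*$, and hence an equivalence. Smashing over $\E$ with $\LE$ yields
\[
\LE \wedge_{\E} \E^{B\Lk} \simeq \bigoplus_i \LE,
\]
a finite coproduct of $E(t)$-local spectra, and therefore itself $E(t)$-local.

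Third, both the filtered colimit $\Colim{k}$ and the inversion $S^{-1}(-)$ are instances of filtered colimits, and the class of $E(t)$-local spectra is closed under filtered colimits because $L_t$ is smashing by the smash product theorem of Hopkins--Ravenel. Indeed, if each $X_i$ is $L_t$-local then
\[
\colim X_i \simeq \colim (X_i \wedge L_t S^0) \simeq (\colim X_i) \wedge L_t S^0 \simeq L_t(\colim X_i),
\]
using that smash products commute with filtered colimits. Applying these two colimit constructions in succession to the $E(t)$-local spectra from the previous step shows that $C_t$ is $E(t)$-local.

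The only step requiring care is the second one --- verifying that $\E^{B\Lk}$ splits as a finite wedge of copies of $\E$ in the $\E$-module category. Once this decomposition is in hand, the behavior of smashing and filtered colimits with respect to $E(t)$-locality is automatic, and the conclusion follows.
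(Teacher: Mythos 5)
Your proof is correct and takes essentially the same approach as the paper: both use that $\E^{B\Lk}$ is a finitely generated free $\E$-module to show that $\LE \wedge_{\E} \E^{B\Lk}$ is $E(t)$-local (the paper notes it is even $K(t)$-local, being $L_{K(t)}\E^{B\Lk}$), and both use the Hopkins--Ravenel smash product theorem to conclude that the filtered colimit defining $C_t$ remains $E(t)$-local. Your write-up simply spells out the free-module splitting and the $K(t)$-local $\Rightarrow$ $E(t)$-local implication in more detail than the paper's terse version.
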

\begin{proof}
Note that $\LE \wedge_{\E} \E^{B\Lk}$ is $E(t)$-local. In fact, it is $K(t)$-local as it is equivalent to $L_{K(t)}(\E^{B\Lk})$. This follows from the fact that $\E^{B\Lk}$ is a free $\E$-module spectrum. Now colimits in the $E(t)$-local category may be computed in the category of spectra (since localization with respect to $E(t)$ is smashing) so $C_t$ is $E(t)$-local.
\end{proof}

While $C_t \sm E_t$ is $E(t)$-local and flat as a $C_t$-module and as an $E_t$-module, it is not $K(t)$-local and thus the argument of \myref{compcbaregood} cannot be used. For that reason we introduce a variant $\bC$, which allows us to exploit the good finiteness properties of the $K(t)$-local category.

\begin{definition}
We define $\bC := L_{K(t)}(C_t \wedge E_t)$. 
\end{definition}

\begin{proposition}
The spectrum $\bC$ is even periodic and $\bC^{*}$ is faithfully flat as an $E_{t}^{*}$-module.
\end{proposition}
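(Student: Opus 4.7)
The strategy is to compute $\bC^*$ explicitly as an $I_t$-completion of $\pi_*(C_t \wedge E_t)$, then read off both flatness and faithful flatness from the resulting description.

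First I would verify that $C_t \wedge E_t$ is even and Landweber exact. The spectrum $C_t$ is even and Landweber exact because it is built from $\E$ by the flat base change $\E \to \LE$, smashing with free $\E$-modules of the form $\E^{B\Lk}$, taking a filtered colimit, and inverting $S$; each operation preserves flatness and even-concentration. Since $E_t$ is even and Landweber exact by construction, \myref{landwebereven} implies that $C_t \wedge E_t$ is even and Landweber exact as well. In particular, the K\"unneth formula
\[
\pi_*(C_t \wedge E_t) \;\cong\; C_t^* \otimes_{MU_*} MU_*MU \otimes_{MU_*} E_t^*
\]
exhibits $\pi_*(C_t \wedge E_t)$ as a flat $E_t^*$-module.

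With this in hand, I would apply the $E_n$-module analogue of \myref{lknexplicit} (noted in the remark after \myref{chromtoarith}) together with \myref{flatcoefficients} to obtain
\[
\bC^* \;\cong\; \bigl(\pi_*(C_t \wedge E_t)[v_t^{-1}]\bigr)_{I_t}^{\wedge} \;\cong\; \pi_*(C_t \wedge E_t)_{I_t}^{\wedge},
\]
the second equality because $v_t$ is already a unit in $E_t^*$. Even periodicity of $\bC$ is then clear: the completion is at an ideal of even-degree generators, and $u \in E_t^{-2}$ remains a unit in $\bC^*$. Flatness of $\bC^*$ over $E_t^*$ follows from \myref{flatflat} applied with $M = \pi_*(C_t \wedge E_t)$ and $R = E_t^*$, the hypothesis being satisfied because $M/I_t$ is flat, hence free, over the graded field $E_t^*/I_t$.

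Finally, for faithful flatness, I would use that $E_t^0$ is local with maximal ideal $I_t$, so it suffices to show $\bC^*/I_t \ne 0$. Because $M$ is flat, completion commutes with reduction modulo $I_t$, giving $\bC^*/I_t \cong \pi_*(C_t \wedge E_t)/I_t$; nonvanishing of this quotient follows from the K\"unneth formula once one verifies that $C_t^*/I_t \ne 0$, which I would deduce from \myref{p:ff} combined with the explicit description of $\LE^*/I_t$ extracted from \myref{flatcoefficients}. The main subtlety I foresee is book-keeping the various ideals $I_t \subset MU_*$, $E_t^*$, and $\LE^*$ so that the localizations and completions interact coherently with the Landweber-exact K\"unneth formula; this is ultimately a consequence of the fact that $p, v_1, \ldots, v_{t-1}$ correspond to the same invariants of the underlying formal group laws on all sides.
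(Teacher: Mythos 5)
Your proposal is correct and follows essentially the same line of reasoning as the paper: even periodicity via \myref{landwebereven} and the observation that $K(t)$-localization of an $E(t)$-local $BP$-module is just $I_t$-completion; flatness of $\pi_*(C_t\wedge E_t)$ over $E_t^*$ from the Landweber-exact K\"unneth formula (the paper cites Hopkins' argument from \cite{coctalos}, which is the same computation); flatness of $\bC^0$ from \myref{flatflat}; and faithful flatness by checking that $\bC^0/I_t\neq 0$. The only cosmetic difference is in the last step: the paper identifies $\bC^0/I_t$ with $\pi_0(K(t)\wedge C_t)$ and cites Proposition 2.17 of \cite{tgcm} for $C_t^0/I_t\neq 0$, whereas you argue purely algebraically that completion of a flat module commutes with reduction mod $I_t$ and deduce $C_t^0/I_t\neq 0$ from the faithful flatness of $C_t^0$ over $\LE^0$ (\myref{p:ff}); both routes are valid and lead to the same conclusion.
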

\begin{proof}
As seen in \myref{landwebereven}, the smash product of even periodic Landweber exact spectra is even periodic and this cannot be changed by completion (which is all that $K(t)$-localization is for $E(t)$-local $BP$-modules). By even periodicity it suffices to prove that $\bC^0$ is faithfully flat over $E_{t}^0$. Note that $\pi_0(C_t \wedge E_t)$ is flat as a $C_{t}^0$-module and as an $E_{t}^0$-module by Hopkins' result in \cite{coctalos}. By \myref{flatflat}, the completion of a flat $E_{t}^0$-module at $I_t$ is flat. Thus $\bC^0$ is flat as an $E_{t}^0$-module. For faithful flatness it suffices to prove that $\bC^0/I_t$ is non-zero. But Hovey-Strickland implies that it is $\pi_0(K(t) \wedge C_t)$ and since $C_{t}^0/I_t$ is nonzero (Proposition 2.17 in \cite{tgcm}) we know that the $K(t)$-localization of $C_t$ is nonzero.   
\end{proof}

\section{From $E$-theory to $E$-theory}
In this section we present a modification of the character maps of \cite{tgcm}. We begin by recalling the character maps. An upshot of the presentation here is that the character map is a map of $E_{\infty}$-rings. We then analyze the modification of the character map applied to good groups and use this to show that centralizers of tuples of commuting elements in good groups are good.
\subsection{Character maps written spectrally}
The character maps of \cite{tgcm} admit an obvious spectral interpretation. For a finite group $G$ and a finite $G$-CW complex $X$ we may consider the evaluation map $B\Lk \times EG \times_G \Fix_{n-t}(X) \lra{} EG \times_G X$. This induces the map of spectra
\[
\E^{EG \times_G X} \lra{} \E^{B\Lk} \wedge_{\E} \E^{EG \times_G \Fix_{n-t}(X)}.
\]
Now the canonical map $\E^{B\Lk} \lra{} C_t$ and the map $\E \lra{} \LE$ induces
\[
\E^{B\Lk} \wedge_{\E} \E^{EG \times_G \Fix_{n-t}(X)} \lra{} C_t \wedge_{\LE} \LE^{EG \times_G \Fix_{n-t}(X)}.
\]
After extending coefficients in the domain the composite induces an equivalence
\[
C_t \wedge_{\E}\E^{EG \times_G X} \lra{\simeq} C_t \wedge_{\LE} \LE^{EG \times_G \Fix_{n-t}(X)}.
\]
In all of this discussion we are merely using the flatness of $C_{t}^*$ over $\E^*$ and $\LE^*$ to translate the algebraic results of \cite{tgcm} to these spectral statements. It is worth noting that all of the maps above are $E_{\infty}$ (after choosing an $E_{\infty}$-inverse to the K\"unneth map). Thus it is clear that the character map is an equivalence of $E_{\infty}$-rings.

Now we present a modification of the above map that has some desirable properties. In particular, the codomain is related to $E_t$ in the same way that the above map is related to $\LE$. It seems to suffer in two respects though. It does not seem to induce an equivalence for all spaces after base change of the domain to $\bC$ and it is not as computable as the above map.

The modification is the composite of two maps. The first is the character map above. The second is the canonical map of $E_{\infty}$-rings
\[
C_t \wedge_{\LE} \LE^{EG \times_G \Fix_{n-t}(X)} \lra{} \bC^{EG \times_G \Fix_{n-t}(X)}.
\]

\begin{proposition}\mylabel{compcbaregood}
For any finite $G$-CW complex $X$ the canonical map $\bC \wedge_{E_t} E_{t}^{EG \times_G X} \lra{\simeq} \bC^{EG \times_G X}$ is an equivalence. 
\end{proposition}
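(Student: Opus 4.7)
The first step in my plan is to reduce to the case $EG \times_G X \simeq BH$ for subgroups $H \le G$. Since $X$ is a finite $G$-CW complex, the Borel construction $EG \times_G X$ admits a finite filtration whose successive cofibers are suspensions $\Sigma^k BH_{\alpha,+}$ of classifying spaces of the isotropy groups of the cells of $X$. The contravariant functors $E_t^{(-)}$ and $\bC^{(-)}$ convert these cofiber sequences into fiber sequences of spectra, and $\bC \wedge_{E_t} (-)$ preserves fiber sequences since it is exact. Induction on the number of cells together with the five lemma therefore reduces the proposition to showing that the comparison map
\[
\bC \wedge_{E_t} E_t^{BH} \lra{\simeq} \bC^{BH}
\]
is an equivalence for every subgroup $H \le G$.

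\textbf{Passing to homotopy groups.} Both sides are $\bC$-module spectra and the natural map is $\bC$-linear, so it suffices to check that it is an isomorphism on homotopy groups. Flatness of $\bC$ over $E_t$ collapses the spectral sequence of \myref{compbl}, yielding
\[
\pi_*(\bC \wedge_{E_t} E_t^{BH}) \cong \bC^* \otimes_{E_t^*} E_t^*(BH),
\]
while the target has homotopy $\bC^*(BH)$. The remaining task is therefore the base change isomorphism $\bC^* \otimes_{E_t^*} E_t^*(BH) \cong \bC^*(BH)$. The failure of base change highlighted in the Landweber example of Section 4 requires a genuine drop in chromatic height, whereas $E_t \to \bC$ is a flat extension of theories at the same height $t$, so that obstruction is absent in principle.

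\textbf{Handling the infiniteness of $BH$.} The principal obstacle is that $BH$ is not a finite CW complex, so the elementary base change of Section 4 applies only to each finite skeleton $B_kH$. Filtering $BH$ by its skeleta produces Milnor short exact sequences computing $E_t^*(BH)$ and $\bC^*(BH)$ as inverse limits. Because $E_t^*$ is Noetherian and $H^*(BH;\F_p)$ is a finitely generated $\F_p$-algebra, an Atiyah--Hirzebruch spectral sequence argument shows that $E_t^*(BH)$ is a finitely generated $E_t^*$-module. The tower $\{E_t^*(B_kH)\}_k$ is therefore pro-isomorphic to its limit, hence Mittag--Leffler with vanishing $\lim^1$; combined with the flatness of $\bC^*$ over $E_t^*$, this allows one to commute the tensor product past the inverse limit. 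Applying the finite base change isomorphism $\bC^* \otimes_{E_t^*} E_t^*(B_kH) \cong \bC^*(B_kH)$ termwise and taking the limit yields
\[
\bC^* \otimes_{E_t^*} E_t^*(BH) \cong \lim_k \bC^* \otimes_{E_t^*} E_t^*(B_kH) \cong \lim_k \bC^*(B_kH) \cong \bC^*(BH),
\]
completing the argument. The delicate point is verifying that flat base change commutes with the pro-constant inverse limit; this is where the finite generation of $E_t^*(BH)$ over the Noetherian ring $E_t^*$ is essential.
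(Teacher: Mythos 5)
Your reduction to classifying spaces $BH$ via the isotropy filtration of $EG \times_G X$ is sound, and the collapse of the K\"unneth spectral sequence by flatness of $\bC^*$ over $E_t^*$ is correct (though the citation to \myref{compbl} is misplaced — that is the completion spectral sequence, not the base-change/K\"unneth one). The gap is in the step handling the infiniteness of $BH$: you assert that because the limit $E_t^*(BH) = \lim_k E_t^*(B_kH)$ is finitely generated over the Noetherian ring $E_t^*$, the tower $\{E_t^*(B_kH)\}$ is ``therefore pro-isomorphic to its limit, hence Mittag--Leffler.'' This implication is false. Finite generation of the limit says nothing about the pro-structure of the tower: the tower
\[
\Z \xleftarrow{\cdot 2} \Z \xleftarrow{\cdot 2} \Z \xleftarrow{\cdot 2} \cdots
\]
has limit $0$, which is as finitely generated as one could want, yet it is not Mittag--Leffler and is not pro-isomorphic to the constant tower on $0$. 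So you have not shown that $\lim^1 \bC^{*+1}(B_kH)$ vanishes, nor that $\bC^* \otimes_{E_t^*} (-)$ commutes with the inverse limit, and the chain of isomorphisms in your last display is unjustified. Establishing Mittag--Leffler for skeletal towers of $E$-cohomology of classifying spaces is a genuinely nontrivial finiteness statement, and it is precisely the sort of thing the $K(t)$-local framework is designed to sidestep.

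The paper's proof avoids these pro-system subtleties entirely by working in the $K(t)$-local category. It uses the finite generation of $E_t^*(EG\times_G X)$ over the Noetherian ring $E_t^*$, together with flatness of $\bC^*$, for a different purpose: to show that $\bC^* \otimes_{E_t^*} E_t^*(EG\times_G X)$ is $I_t$-complete, hence that $\bC \wedge_{E_t} E_t^{EG\times_G X}$ is $K(t)$-local (it is visibly $E(t)$-local). Once both sides of the comparison are known to be $K(t)$-local, the $K(t)$-local Spanier--Whitehead duality of spaces of the form $EG\times_G X$ (Corollary 8.7 of Hovey--Strickland) rewrites $E_t^{EG\times_G X}$ as $L_{K(t)}(E_t \wedge D(EG\times_G X))$ with $D(EG\times_G X) = F(EG\times_G X, L_{K(t)}S)$, and a short chain of equivalences identifies $\bC \wedge_{E_t} E_t^{EG\times_G X}$ with $\bC^{EG\times_G X}$. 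If you want to repair your argument along your original lines, you would need to independently establish that the skeletal tower is pro-constant — which in practice one does by exactly the sort of $K(t)$-local duality input the paper uses — so it is cleaner to adopt the paper's route.
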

\begin{proof}
Because $\bC^*$ is faithfully flat over $E_{t}^*$ there is an isomorphism
\[
\bC^* \otimes_{E_{t}^*} E_{t}^*(EG \times_G X) \cong \pi_{-*}(\bC \wedge_{E_t} E_{t}^{EG \times_G X}).
\]
It is clear that $\bC \wedge_{E_t} E_{t}^{EG \times_G X}$ is $E(t)$-local. However, since $E_{t}^*(EG \times_G X)$ is finitely generated and $E_{t}^*$ is Noetherian, the above isomorphism implies that
\[
\bC^* \otimes_{E_{t}^*} E_{t}^*(EG \times_G X)
\]
is $I_t$-complete and thus $\bC \wedge_{E_t} E_{t}^{EG \times_G X}$ is $K(t)$-local. Let $D(EG\times_G X) = F(EG\times_G X, L_{K(t)}S)$. Now we have equivalences
\begin{align*}
\bC \wedge_{E_t} E_{t}^{EG \times_G X} &\simeq L_{K(t)}(\bC \wedge_{E_t} E_{t}^{EG \times_G X}) \\
&\simeq  L_{K(t)}(\bC \wedge_{E_t} L_{K(t)}(E_{t}\wedge (D(EG \times_G X)))) \\
&\simeq L_{K(t)}(\bC \wedge_{E_t} E_{t}\wedge (D(EG \times_G X))) \\
&\simeq L_{K(t)}(\bC \wedge (D(EG \times_G X))) \\
&\simeq \bC^{EG \times_G X}.
\end{align*}
The second and fifth equivalences follow from the $K(t)$-local duality of spaces of the form $EG \times_G X$ (Corollary 8.7 in \cite{hoveystrickland}). 
\end{proof}

Thus in the category of $E_{\infty}$-rings we have the map
\[
\E^{EG \times_G X} \lra{} \bC \wedge_{E_t} E_{t}^{EG \times_G \Fix_{n-t}(X)}
\]
that factors
\[
\E^{EG \times_G X} \rightarrow C_t \wedge_{\LE} \LE^{EG \times_G \Fix_{n-t}(X)} \rightarrow \bC^{EG \times_G \Fix_{n-t}(X)} \overset{\simeq}{\leftarrow} \bC \wedge_{E_t} E_{t}^{EG \times_G \Fix_{n-t}(X)}.
\]
The middle map is the most mysterious. The reason for this is that it is not clear at all that $\bC^0$ is a flat $\LE^0$-module.

\begin{proposition}
Let $G$ be a finite group and let $X$ be a finite $G$-CW complex with the property $\LE^*(EG \times_G X)$ finitely generated and projective as an $\LE^*$-module, then there is an isomorphism
\[
\bC^* \otimes_{\LE^*} \LE^*(EG \times_G X) \cong \bC^*(EG \times_G X).
\]
\end{proposition}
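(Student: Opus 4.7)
The plan is to adapt the argument of Proposition~\myref{compcbaregood}, replacing $E_t$ by $\LE$ and using the projectivity hypothesis on $\LE^*(Y)$ to compensate for the fact that $\bC^*$ is not known to be flat over $\LE^*$. Throughout, write $Y = EG \times_G X$.

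The first step is to establish that $\bC \wedge_\LE \LE^Y$ is $K(t)$-local. Since $\LE^*(Y)$ is projective (hence flat) over $\LE^*$, the K\"unneth spectral sequence for the relative smash product collapses, giving
\[
\pi_*(\bC \wedge_\LE \LE^Y) \cong \bC^* \otimes_{\LE^*} \LE^*(Y).
\]
The projectivity also exhibits $\LE^*(Y)$ as a retract of a finite free $\LE^*$-module, so this tensor product is a retract of $(\bC^*)^n$ for some $n$. Because $\bC^*$ is $I_t$-complete (being the coefficients of the $K(t)$-localization of a flat $E_t$-module), so is the retract. Combined with $E(t)$-locality of $\bC \wedge_\LE \LE^Y$ as a $\bC$-module, this forces $K(t)$-locality.

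The second step runs the same chain of equivalences as in Proposition~\myref{compcbaregood}. Writing $D(Y) = F(Y_+, L_{K(t)}S)$ for the $K(t)$-local Spanier--Whitehead dual, one has
\begin{align*}
\bC \wedge_\LE \LE^Y &\simeq L_{K(t)}(\bC \wedge_\LE \LE^Y) \\
&\simeq L_{K(t)}(\bC \wedge_\LE L_{K(t)}(\LE \wedge D(Y))) \\
&\simeq L_{K(t)}(\bC \wedge_\LE \LE \wedge D(Y)) \\
&\simeq L_{K(t)}(\bC \wedge D(Y)) \\
&\simeq \bC^Y,
\end{align*}
where the second and fifth equivalences invoke the $K(t)$-local duality of $Y$ (Corollary 8.7 of \cite{hoveystrickland}) applied to $\LE$ and to $\bC$ respectively, and the third uses idempotence of $L_{K(t)}$. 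Passing to $\pi_{-*}$ yields the claimed isomorphism $\bC^* \otimes_{\LE^*} \LE^*(Y) \cong \bC^*(Y)$.

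The main obstacle is Step~1: in Proposition~\myref{compcbaregood} the corresponding statement was driven by faithful flatness of $\bC^*$ over $E_t^*$ together with Noetherianness of $E_t^*$, neither of which is available for $\LE^*$. The projectivity of $\LE^*(Y)$ is precisely what replaces these ingredients, collapsing the K\"unneth spectral sequence from the other factor and guaranteeing $I_t$-completeness of the coefficients via retraction from a free $\bC^*$-module of finite rank.
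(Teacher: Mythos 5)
Your proof is correct and follows the paper's own strategy: obtain the K\"unneth isomorphism from projectivity, show $K(t)$-locality of the relative smash product, and then rerun the chain of duality equivalences from Proposition~\myref{compcbaregood} with $E_t$ replaced by $\LE$. One small correction to your closing remark: Noetherianness of $\LE^*$ \emph{is} available --- $\LE^0$ is the $I_t$-adic completion of a localization of the Noetherian ring $\W k\llbracket u_1,\ldots,u_{n-1}\rrbracket$, and both operations preserve the Noetherian property; indeed, the paper's own proof of this proposition invokes it explicitly (``$\LE^*$ is Noetherian'') to conclude $K(t)$-locality. Your alternative route to $I_t$-completeness via retracts of finite free $\bC^*$-modules is a valid and perhaps cleaner way to reach the same conclusion, but it is a substitute by choice, not by necessity.
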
 
\begin{proof}
This proof is essentially the same as the proof of \myref{compcbaregood}. Since $\LE^*(EG \times_G X)$ is projective we have an isomorphism
\[
\bC^* \otimes_{\LE^*} \LE^*(EG \times_G X) \cong \pi_{-*}(\bC \wedge_{\LE} \LE^{EG \times_G X}).
\]
Since $\LE^*(EG \times_G X)$ is finitely generated and $\LE^*$ is Noetherian the smash product is $K(t)$-local. Now we have the same set of equivalences as in \myref{compcbaregood} with $E_t$ replaced by $\LE$.
\end{proof}

\begin{remark}
When $G = \Z/p^k$ we recover the anticipated result that $\bC^0 \otimes \G_{\LE}[p^k] \cong \bC^0 \otimes \G_{E_t}[p^k]$.
\end{remark}

\begin{definition}
A finite group $G$ is good (at the fixed prime $p$) if $\E^*(BG)$ is even and free for all $n$. 
\end{definition}

\begin{remark}
Our definition of a good group differs somewhat from the original in Definition 7.1 in \cite{hkr}. They observe that their definition implies the definition of a good group used here.
\end{remark}

\begin{remark}
Because the $\E$-cohomology of a good group is even it admits an algebro-geometric interpretation. Because the $\E$-cohomology of a good group is free the character map of \cite{hkr} is an embedding and so the ring can be attacked using character theoretic methods. 
\end{remark}

\begin{corollary}\mylabel{compcbarlgood}
Let $G$ be a good group and let $\al:\Z_{p}^{h} \lra{} G$, then 
\[
\bC^* \otimes_{\LE^*} \LE^*(BC(\im \al)) \cong \bC^*(BC(\im \al)).
\] 
\end{corollary}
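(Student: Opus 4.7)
The plan is to reduce the statement to the preceding proposition applied to the finite group $C(\im\al)$ with $X=*$, which requires verifying that $\LE^*(BC(\im\al))$ is finitely generated and projective as an $\LE^*$-module. The key inputs are the character isomorphism of \myref{charactermap} for the good group $G$ together with the faithful flatness of $C_t^*$ over $\LE^*$ established in \myref{p:ff}.

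Since $C(\im\al)$ depends only on $\im\al$ (a finite abelian $p$-subgroup of $G$), I may replace $\al$ by a homomorphism $\beta\colon\Z_p^{n-t}\to G$ with the same image, obtained by choosing a generating set of $\im\al$ of size at most $n-t$. Applying the character map of \myref{charactermap} with $X=*$ and using the standard identification $\Fix_{n-t}(*) \cong \coprod_{[\gamma]} G/C(\im\gamma)$ of $G$-sets, so that $EG\times_G\Fix_{n-t}(*) \simeq \coprod_{[\gamma]} BC(\im\gamma)$, yields
\[
C_t^* \otimes_{\E^*} \E^*(BG) \;\cong\; \prod_{[\gamma]} C_t^* \otimes_{\LE^*} \LE^*(BC(\im\gamma)),
\]
with $[\gamma]$ ranging over the conjugacy classes of homomorphisms $\Z_p^{n-t} \to G$. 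Goodness of $G$ makes the left-hand side a finitely generated free $C_t^*$-module, so each factor on the right, and in particular the one indexed by $[\beta]$, is a finitely generated projective $C_t^*$-module as a direct summand of a finitely generated free module.

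Finally, since $C_t^*$ is faithfully flat over $\LE^*$ by \myref{p:ff}, faithfully flat descent for finitely generated projective modules implies that $\LE^*(BC(\im\al))$ is itself finitely generated and projective over $\LE^*$. The hypothesis of the preceding proposition is then satisfied, and its conclusion is exactly the claimed isomorphism. The main technical point is the descent step: one uses that both finite generation and flatness descend along a faithfully flat ring map, together with the fact that a finitely generated flat module over the Noetherian ring $\LE^*$ is automatically projective. A subsidiary point is the reduction to a representative $\beta$ in $\hom(\Z_p^{n-t},G)$, which implicitly uses that $\im\al$ is generated by at most $n-t$ elements and is absorbed into the choice of conjugacy class.
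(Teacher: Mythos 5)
Your proof takes essentially the same route as the paper's: apply the character isomorphism of \myref{charactermap} to $G$ with $X=\ast$, use goodness of $G$ to see that the left-hand side is a finitely generated free $C_t^*$-module, extract the factor indexed by (the conjugacy class of) $\al$ as a finitely generated projective summand, and then descend along the faithfully flat map $\LE^* \to C_t^*$ of \myref{p:ff} to conclude that $\LE^*(BC(\im\al))$ is finitely generated projective, so the preceding proposition applies with $X=\ast$ and the ambient group $C(\im\al)$. The paper's proof is just a terser version of this.

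One small point worth keeping in mind: you correctly flag, as a ``subsidiary point,'' that passing from $\al\colon\Z_p^h\to G$ to a $\beta\in\hom(\Z_p^{n-t},G)$ with the same image requires $\im\al$ to be generated by at most $n-t$ elements. The paper's proof carries exactly the same implicit assumption (its displayed product is indexed over $\hom(\Z_p^{n-t},G)/\!\!\sim$, and it then speaks of ``a fixed $\al$'' as though $\al$ appears in that index set). When $\im\al$ genuinely requires more than $n-t$ generators one would instead split $\al$ as $(\al_1,\al_2)$ with $\al_1\in\hom(\Z_p^{n-t},G)$, use the $h\le n-t$ case together with \myref{goodcentralizers} to see $C(\im\al_1)$ is good, and iterate, since $C(\im\al)=C_{C(\im\al_1)}(\im\al_2)$. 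You are right to notice the issue, and it would be better to either resolve it this way or state the restriction $h\le n-t$ explicitly rather than leaving it as an implicit absorption into the choice of conjugacy class.
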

\begin{proof}
We show that $\LE^*(BC(\im \al))$ is finitely generated and projective. The character map of \myref{charactermap} gives a factorization
\[
C_{t}^* \otimes_{\E^*} \E^*(BG) \cong \Prod{[\al] \in \hom(\Z_{p}^{n-t}, G)/\sim} C_{t}^* \otimes_{\LE^*} \LE^*(BC(\im \al)).
\]
For a fixed $\al$, this implies that $C_{t}^* \otimes_{\LE^*} \LE^*(BC(\im \al))$ is projective because it is a summand of a free module and finitely generated because there is a retract to the inclusion as a summand. Now since $C_{t}^*$ is faithfully flat as an $\LE^*$-module, faithfully flat descent for finitely generated projective modules implies that $\LE^*(BC(\im \al))$ is finitely generated and projective.
\end{proof}

Together these results give the theorem. 

\begin{theorem}
For a good group $G$ we have an equivalence
\[
\bC \wedge_{\E} \E^{BG} \lra{\simeq} \bC^{\Loops^{n-t}BG} \overset{\simeq}{\longleftarrow} \bC \wedge_{E_t} E_{t}^{\Loops^{n-t}BG},
\]
where $\Loops^{h}BG = \hom(B\Z_{p}^{h}, BG) = EG \times_G \Fix_{h}(\ast)$.
\end{theorem}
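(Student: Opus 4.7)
The plan is to verify the two arrows in the diagram separately, after which they automatically assemble into the stated equivalence via the character map. First, I would use the standard identification
\[
\Loops^{n-t}BG = EG \times_G \Fix_{n-t}(\ast) \simeq \Coprod{[\al] \in \hom(\Z_p^{n-t},G)/\sim} BC(\im \al),
\]
where the disjoint union is \emph{finite} because $G$ is. This finiteness is crucial: it allows one to freely interchange $\bC \wedge_{(-)}(-)$ with finite products, and to identify the cohomology of the disjoint union with the product of the cohomologies of the pieces.

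The right-hand arrow is essentially nothing to prove: it is an immediate application of \myref{compcbaregood} with $X = \Fix_{n-t}(\ast)$, since $\Fix_{n-t}(\ast)$ is a finite $G$-CW complex.

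For the left-hand arrow, my plan is to factor it through the intermediate spectrum $\bC \wedge_\LE \LE^{\Loops^{n-t}BG}$. The spectral form of the character map, \myref{charactermap} applied with $X = \ast$, gives an equivalence $C_t \wedge_\E \E^{BG} \simeq C_t \wedge_\LE \LE^{\Loops^{n-t}BG}$. Base-changing along the $E_{\infty}$-ring map $C_t \to \bC$ then yields $\bC \wedge_\E \E^{BG} \simeq \bC \wedge_\LE \LE^{\Loops^{n-t}BG}$, so it suffices to verify that the canonical comparison $\bC \wedge_\LE \LE^{\Loops^{n-t}BG} \to \bC^{\Loops^{n-t}BG}$ is an equivalence. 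Distributing over the finite coproduct decomposition above, this map breaks into a finite product of maps of the form $\bC \wedge_\LE \LE^{BC(\im \al)} \to \bC^{BC(\im \al)}$, and on homotopy each factor realises precisely the isomorphism $\bC^* \otimes_{\LE^*} \LE^*(BC(\im \al)) \cong \bC^*(BC(\im \al))$ furnished by \myref{compcbarlgood}.

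The one point that requires care is the identification of the algebraic base change $\bC^* \otimes_{\LE^*} \LE^*(BC(\im \al))$ with $\pi_{-*}(\bC \wedge_\LE \LE^{BC(\im \al)})$; this is where goodness is really used, since it holds because $\LE^*(BC(\im \al))$ is finitely generated and projective over $\LE^*$, a fact extracted in the proof of \myref{compcbarlgood} by faithfully flat descent along the extension $\LE^* \to C_t^*$ of \myref{p:ff}. Beyond this the theorem is a formal assembly of \myref{compcbaregood}, \myref{compcbarlgood}, and the spectral character map, glued together by the decomposition of $\Loops^{n-t}BG$ into classifying spaces of centralizers; I do not anticipate any further obstacle.
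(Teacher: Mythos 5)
Your proposal is correct and follows essentially the same path the paper intends, since the paper itself simply states ``Together these results give the theorem'' after assembling \myref{compcbaregood}, the unlabeled proposition on projective $\LE^*$-cohomology, and \myref{compcbarlgood}. Your explicit decomposition of $\Loops^{n-t}BG$ into a finite disjoint union of classifying spaces of centralizers, the application of \myref{compcbaregood} for the right arrow, and the factorization through $\bC \wedge_\LE \LE^{\Loops^{n-t}BG}$ via the base-changed character map plus \myref{compcbarlgood} for the left arrow, is exactly how those ingredients fit together.
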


\begin{remark}
When $n=1$ we obtain a map from $E$-theory to $p$-adic $K$-theory. This seems like a useful tool. It allows one to reduce certain computations at height $n$ to computations in representation theory. This is used by the Tomer Schlank and the second author in \cite{tpst} to give a new proof and generalization of Strickland's theorem regarding the $E$-theory of symmetric groups.
\end{remark}

\section{Examples of good groups}

A comprehensive list of finite groups that are known to be good at a fixed prime $p$ can be found in the habilitation thesis \cite{SchusterHab} of Bj{\"o}rn Schuster, to which we refer for the original references; these include:
\begin{enumerate}
 \item abelian groups
 \item symmetric groups
 \item $GL_n(\F_q)$ with $p \nmid q$
 \item all groups of order $p^3$ and of order $32$
 \item metacyclic groups
 \item the Mathieu group $M_{12}$, see \cite{SchusterM12}.
\end{enumerate}
The collection $\Good$ of good groups is closed under products and also under wreath products with $\Z/p$; moreover, a group is good if its Sylow $p$-subgroup is good. If $G = H_1 \times H_2$, then $H_1$ is good if both $G$ and $H_2$ are good because there is a K{\"u}nneth isomorphism for good groups. Furthermore, given an extension of the form 
\[\xymatrix{\ast \ar[r] & H \ar[r] & G \ar[r] & \Z/p \ar[r] & \ast,}\]
Kriz \cite{Krizodd} gives conditions for when $H$ good implies $G$ is good, and conversely. In particular, semi-direct products of elementary abelian $p$-groups and $\Z/p$ are good.

The methods of the previous section allow us to deduce a new closure property of $\Good$.

\begin{corollary}\mylabel{goodcentralizers}
Let $G$ be a good group and let $\al:\Z_{p}^{h} \lra{} G$, then $\E^*(BC(\im \al))$ is finitely generated, free, and evenly generated as an $\E^*$-module.
\end{corollary}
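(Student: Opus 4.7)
The plan is to show that $H := C_G(\im\alpha)$ is good, i.e., that $E_n^*(BH)$ is finitely generated, free, and evenly concentrated for every $n \ge 1$. The key observation is that one should apply the main theorem not at the target height $n$ but at the inflated primary height $m := n + h$, using $n$ itself as the auxiliary lower height; the difference $m - n = h$ then matches the arity of $\alpha$. Since $G$ is good at every height (the definition quantifies over all $n$), $E_m^*(BG)$ is a finitely generated, free, evenly concentrated $E_m^*$-module, so the main theorem yields
\[
\bar{C}^* \otimes_{E_m^*} E_m^*(BG) \;\cong\; \bar{C}^* \otimes_{E_n^*} E_n^*(\Loops^h BG),
\]
whose left-hand side is a finitely generated, free, evenly concentrated $\bar{C}^*$-module. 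Here $\bar{C}$ denotes the spectrum constructed in the previous section for the height datum $(m, n)$.

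Next I would decompose the right-hand side using the identification $\Loops^h BG \simeq \coprod_{[\beta\colon\Z_p^h\to G]} BC_G(\im\beta)$, obtaining
\[
\bar{C}^* \otimes_{E_n^*} E_n^*(\Loops^h BG) \;\cong\; \prod_{[\beta]} \bar{C}^* \otimes_{E_n^*} E_n^*(BC_G(\im\beta)).
\]
Every factor is a direct summand of the finitely generated free evenly concentrated $\bar{C}^*$-module produced above, and is therefore itself finitely generated, projective, and evenly concentrated as a $\bar{C}^*$-module. Selecting the component at $\beta = \alpha$ singles out $\bar{C}^* \otimes_{E_n^*} E_n^*(BH)$.

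Finally I would descend along the faithfully flat map $E_n^* \to \bar{C}^*$: faithfully flat descent for finitely generated projective modules forces $E_n^*(BH)$ to be finitely generated and projective over $E_n^*$, hence free since $E_n^*$ is graded local, while faithful flatness separately transports even-concentration back downstairs. Since $n$ was arbitrary, $H$ is good. The main subtlety is purely organizational: the key insight is to invoke the main theorem at the inflated height $m = n + h$ rather than at the target height $n$, exploiting the ``for all $n$'' quantifier hidden in the definition of a good group; once this bookkeeping is set up, the argument reduces to a clean application of faithfully flat descent for finitely generated projective modules plus the observation that over a graded local ring projective modules are free.
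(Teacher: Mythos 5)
Your proof is correct and follows essentially the same route as the paper: apply the character-map isomorphism (the main theorem, which the paper invokes through \myref{compcbarlgood} and \myref{compcbaregood}), extract the factor at $\alpha$ as a direct summand of a finitely generated free even $\bar C^*$-module, and descend along the faithfully flat extension $E_n^* \to \bar C^*$ using descent for finitely generated projective modules plus ``projective implies free over a complete local ring.'' One small but genuine improvement in your write-up is that you make the height re-indexing $m = n + h$ explicit: the paper's proof as literally written produces $E_t^*(BC(\im\alpha))$ (with $t$ the lower height of the character map), and silently relies on the reader to note that $t$ may be arbitrary because goodness of $G$ quantifies over all heights.
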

\begin{proof}
\myref{compcbarlgood} implies that $\bC^*(BC(\im \al))$ is finitely generated, projective, and even. \myref{compcbaregood} implies that 
\[
\bC^*(BC(\im \al)) \cong \bC^* \otimes_{E_{t}^*} E_{t}^*(BC(\im \al))
\]
and \myref{p:ff} implies that $C_{t}^*$ is faithfully flat as an $E_{t}^*$-modules. Faithfully flat descent for finitely generated projective modules implies that $E_{t}^*(BC(\im \al))$ is finitely generated, even, and projective. But now since $E_{t}^*$ is complete local, projective implies free.
\end{proof}

\begin{remark}
Calling a group $E_n$-good if its $E_n$-cohomology is concentrated in even degrees and free, \myref{goodcentralizers} applied to the identity element shows that $E_{n+1}$-good implies $E_n$-good. This is compatible with Minami's result in \cite{Minamikn}.
\end{remark}

Using GAP, we can thus construct new examples of good groups. 
\begin{example}
Let $p=2$ and consider $G = GL_2(\F_3) \wr C_2$, a good group of order $4608$. There exists an element $g \in G$ of order $4$ with centralizer 
\[C_G(g) = H \rtimes C_2, \] 
where $H$ the binary octahedral group, i.e., a non-split extension of $S_4$ by $C_2$. The GAP-Id of $C_G(g)$ in the Small Groups Library is $[96,192]$. Since the Sylow 2-subgroup of $C_G(g)$ has order 32, \cite{Schuster32} independently shows that this group has to be good. 

However, the group $K = C_G(g) \wr C_2$ contains an element $k$ of order $8$ whose centralizer 
\[C_K(k) = (((C_8 \times C_2) \rtimes C_2) \rtimes C_3) \rtimes C_2\]
has GAP-Id $[192,963]$ and Sylow 2-subgroup $(C_8 \times C_4) \rtimes C_2$, which is therefore not covered by the previous list of examples. This process can be iterated, giving rise to other new examples. 
\end{example}

For an odd prime $p$, \cite{Kriz-odd} implies that the unipotent radical in $GL_4(\F_p)$ is not good. So, as a curious consequence, we see that it cannot be obtained by iteratively applying the constructions $\Good$ is closed under to the above list of known good groups. We do not know how to show this using only algebraic methods.

\bibliographystyle{alpha}
\bibliography{mybib}

\end{document}